\newtheorem{algorithm}[theorem]{Algorithm}
\newtheorem{axiom}[theorem]{Axiom}
\newtheorem{conjecture}[theorem]{Conjecture}
\newtheorem{example}[theorem]{Example}
\newtheorem{exercise}[theorem]{Exercise}
\newtheorem{remark}[theorem]{Remark}
\providecommand{\BOXEDSPECIAL}[4]{\hbox to #2{\raise #3\hbox to #2{\null #1\hfil}}}
\chardef\@x10\chardef\@xv60
\def\tcitime{
\def\@time{%
  \@minute\time\@hour\@minute\divide\@hour\@xv
  \ifnum\@hour<\@x 0\fi\the\@hour:%
  \multiply\@hour\@xv\advance\@minute-\@hour
  \ifnum\@minute<\@x 0\fi\the\@minute
  }}%
\def\QCTOpt[#1]#2{%
  \def\QCTOptB{#1}
  \def\QCTOptA{#2}
}
\def\QCTNOpt#1{%
  \def\QCTOptA{#1}
  \let\QCTOptB\empty
}
\def\Qct{%
  \@ifnextchar[{%
    \QCTOpt}{\QCTNOpt}
}
\def\QCBOpt[#1]#2{%
  \def\QCBOptB{#1}
  \def\QCBOptA{#2}
}
\def\QCBNOpt#1{%
  \def\QCBOptA{#1}
  \let\QCBOptB\empty
}
\def\Qcb{%
  \@ifnextchar[{%
    \QCBOpt}{\QCBNOpt}
}
\def\PrepCapArgs{%
  \ifx\QCBOptA\empty
    \ifx\QCTOptA\empty
      {}%
    \else
      \ifx\QCTOptB\empty
        {\QCTOptA}%
      \else
        [\QCTOptB]{\QCTOptA}%
      \fi
    \fi
  \else
    \ifx\QCBOptA\empty
      {}%
    \else
      \ifx\QCBOptB\empty
        {\QCBOptA}%
      \else
        [\QCBOptB]{\QCBOptA}%
      \fi
    \fi
  \fi
}
\def\GRAPHICSPS#1{%
 \ifcase\GRAPHICSTYPE
   \special{ps: #1}%
 \or
   \special{language "PS", include "#1"}%
 \fi
}%
\def\graffile#1#2#3#4#5{%
    \bgroup
    \leavevmode
    \@ifundefined{bbl@deactivate}{\def~{\string~}}{\activesoff}
    \raise -#4 \BOXTHEFRAME{%
       \BOXEDSPECIAL{#1}{#2}{#3}{#5}}%
    \egroup
}%
\def\draftbox#1#2#3#4{%
 \leavevmode\raise -#4 \hbox{%
  \frame{\rlap{\protect\tiny #1}\hbox to #2%
   {\vrule height#3 width\z@ depth\z@\hfil}%
  }%
 }%
}%
\newif\ifwasdraft
\def\GRAPHIC#1#2#3#4#5{%
 \ifnum\draft=\@ne\draftbox{#2}{#3}{#4}{#5}%
  \else\graffile{#1}{#3}{#4}{#5}{#2}%
  \fi
 }%
\def\addtoLaTeXparams#1{%
    \edef\LaTeXparams{\LaTeXparams #1}}%
\newif\ifBoxFrame \BoxFramefalse
\newif\ifOverFrame \OverFramefalse
\newif\ifUnderFrame \UnderFramefalse
\def\BOXTHEFRAME#1{%
   \hbox{%
      \ifBoxFrame
         \frame{#1}%
      \else
         {#1}%
      \fi
   }%
}
\def\doFRAMEparams#1{\BoxFramefalse\OverFramefalse\UnderFramefalse\readFRAMEparams#1\end}%
\def\readFRAMEparams#1{%
 \ifx#1\end%
  \let\next=\relax
  \else
  \ifx#1i\dispkind=\z@\fi
  \ifx#1d\dispkind=\@ne\fi
  \ifx#1f\dispkind=\tw@\fi
  \ifx#1t\addtoLaTeXparams{t}\fi
  \ifx#1b\addtoLaTeXparams{b}\fi
  \ifx#1p\addtoLaTeXparams{p}\fi
  \ifx#1h\addtoLaTeXparams{h}\fi
  \ifx#1X\BoxFrametrue\fi
  \ifx#1O\OverFrametrue\fi
  \ifx#1U\UnderFrametrue\fi
  \ifx#1w
    \ifnum\draft=1\wasdrafttrue\else\wasdraftfalse\fi
    \draft=\@ne
  \fi
  \let\next=\readFRAMEparams
  \fi
 \next
 }%
\def\IFRAME#1#2#3#4#5#6{%
      \bgroup
      \let\QCTOptA\empty
      \let\QCTOptB\empty
      \let\QCBOptA\empty
      \let\QCBOptB\empty
      #6%
      \parindent=0pt%
      \leftskip=0pt
      \rightskip=0pt
      \setbox0 = \hbox{\QCBOptA}%
      \@tempdima = #1\relax
      \ifOverFrame
          \typeout{This is not implemented yet}%
          \show\HELP
      \else
         \ifdim\wd0>\@tempdima
            \advance\@tempdima by \@tempdima
            \ifdim\wd0 >\@tempdima
               \textwidth=\@tempdima
               \setbox1 =\vbox{%
                  \noindent\hbox to \@tempdima{\hfill\GRAPHIC{#5}{#4}{#1}{#2}{#3}\hfill}\\%
                  \noindent\hbox to \@tempdima{\parbox[b]{\@tempdima}{\QCBOptA}}%
               }%
               \wd1=\@tempdima
            \else
               \textwidth=\wd0
               \setbox1 =\vbox{%
                 \noindent\hbox to \wd0{\hfill\GRAPHIC{#5}{#4}{#1}{#2}{#3}\hfill}\\%
                 \noindent\hbox{\QCBOptA}%
               }%
               \wd1=\wd0
            \fi
         \else
            \ifdim\wd0>0pt
              \hsize=\@tempdima
              \setbox1 =\vbox{%
                \unskip\GRAPHIC{#5}{#4}{#1}{#2}{0pt}%
                \break
                \unskip\hbox to \@tempdima{\hfill \QCBOptA\hfill}%
              }%
              \wd1=\@tempdima
           \else
              \hsize=\@tempdima
              \setbox1 =\vbox{%
                \unskip\GRAPHIC{#5}{#4}{#1}{#2}{0pt}%
              }%
              \wd1=\@tempdima
           \fi
         \fi
         \@tempdimb=\ht1
         \advance\@tempdimb by \dp1
         \advance\@tempdimb by -#2%
         \advance\@tempdimb by #3%
         \leavevmode
         \raise -\@tempdimb \hbox{\box1}%
      \fi
      \egroup%
}%
\def\DFRAME#1#2#3#4#5{%
 \begin{center}
     \let\QCTOptA\empty
     \let\QCTOptB\empty
     \let\QCBOptA\empty
     \let\QCBOptB\empty
     \ifOverFrame 
        #5\QCTOptA\par
     \fi
     \GRAPHIC{#4}{#3}{#1}{#2}{\z@}
     \ifUnderFrame 
        \nobreak\par\nobreak#5\QCBOptA
     \fi
 \end{center}%
 }%
\def\FFRAME#1#2#3#4#5#6#7{%
 \begin{figure}[#1]%
  \let\QCTOptA\empty
  \let\QCTOptB\empty
  \let\QCBOptA\empty
  \let\QCBOptB\empty
  \ifOverFrame
    #4
    \ifx\QCTOptA\empty
    \else
      \ifx\QCTOptB\empty
        \caption{\QCTOptA}%
      \else
        \caption[\QCTOptB]{\QCTOptA}%
      \fi
    \fi
    \ifUnderFrame\else
      \label{#5}%
    \fi
  \else
    \UnderFrametrue%
  \fi
  \begin{center}\GRAPHIC{#7}{#6}{#2}{#3}{\z@}\end{center}%
  \ifUnderFrame
    #4
    \ifx\QCBOptA\empty
      \caption{}%
    \else
      \ifx\QCBOptB\empty
        \caption{\QCBOptA}%
      \else
        \caption[\QCBOptB]{\QCBOptA}%
      \fi
    \fi
    \label{#5}%
  \fi
  \end{figure}%
 }%
\def\makeactives{
  \catcode`\"=\active
  \catcode`\;=\active
  \catcode`\:=\active
  \catcode`\'=\active
  \catcode`\~=\active
}
   \gdef\activesoff{%
      \def"{\string"}
      \def;{\string;}
      \def:{\string:}
      \def'{\string'}
    }
\def\FRAME#1#2#3#4#5#6#7#8{%
 \bgroup
 \ifnum\draft=\@ne
   \wasdrafttrue
 \else
   \wasdraftfalse%
 \fi
 \def\LaTeXparams{}%
 \dispkind=\z@
 \def\LaTeXparams{}%
 \doFRAMEparams{#1}%
 \ifnum\dispkind=\z@\IFRAME{#2}{#3}{#4}{#7}{#8}{#5}\else
  \ifnum\dispkind=\@ne\DFRAME{#2}{#3}{#7}{#8}{#5}\else
   \ifnum\dispkind=\tw@
    \edef\@tempa{\noexpand\FFRAME{\LaTeXparams}}%
    \@tempa{#2}{#3}{#5}{#6}{#7}{#8}%
    \fi
   \fi
  \fi
  \ifwasdraft\draft=1\else\draft=0\fi{}%
  \egroup
 }%
\def\TEXUX#1{"texux"}
\def\func#1{\mathop{\rm #1}\nolimits}%
\long\def\QQQ#1#2{%
     \long\expandafter\def\csname#1\endcsname{#2}}%
\long\def\QQA#1#2{}%
\newcommand{\QTR}[2]{\csname text#1\endcsname{#2}}
\def\EXPAND#1[#2]#3{}%
\def\NOEXPAND#1[#2]#3{}%
\def\LaTeXparent#1{}%
\def\ChildStyles#1{}%
\def\ChildDefaults#1{}%
\def\QTagDef#1#2#3{}%
  \providecommand{\UNICODE}[2][]{}
\def\QQfnmark#1{\footnotemark}
 \def\abstract{%
  \if@twocolumn
   \section*{Abstract (Not appropriate in this style!)}%
   \else \small 
   \begin{center}{\bf Abstract\vspace{-.5em}\vspace{\z@}}\end{center}%
   \quotation 
   \fi
  }%
   \def\registered{\relax\ifmmode{}\r@gistered
                    \else$\m@th\r@gistered$\fi}%
 \def\r@gistered{^{\ooalign
  {\hfil\raise.07ex\hbox{$\scriptstyle\rm\text{R}$}\hfil\crcr
  \mathhexbox20D}}}}{}%
\newdimen\theight
\def\Column{%
 \vadjust{\setbox\z@=\hbox{\scriptsize\quad\quad tcol}%
  \theight=\ht\z@\advance\theight by \dp\z@\advance\theight by \lineskip
  \kern -\theight \vbox to \theight{%
   \rightline{\rlap{\box\z@}}%
   \vss
   }%
  }%
 }%
\def\qed{%
 \ifhmode\unskip\nobreak\fi\ifmmode\ifinner\else\hskip5\p@\fi\fi
 \hbox{\hskip5\p@\vrule width4\p@ height6\p@ depth1.5\p@\hskip\p@}%
 }%
\def\miss{\hbox{\vrule height2\p@ width 2\p@ depth\z@}}%
\def\tcol#1{{\baselineskip=6\p@ \vcenter{#1}} \Column}  %
\def\newfmtname{LaTeX2e}
  \DeclareOldFontCommand{\rm}{\normalfont\rmfamily}{\mathrm}
  \DeclareOldFontCommand{\sf}{\normalfont\sffamily}{\mathsf}
  \DeclareOldFontCommand{\tt}{\normalfont\ttfamily}{\mathtt}
  \DeclareOldFontCommand{\bf}{\normalfont\bfseries}{\mathbf}
  \DeclareOldFontCommand{\it}{\normalfont\itshape}{\mathit}
  \DeclareOldFontCommand{\sl}{\normalfont\slshape}{\@nomath\sl}
  \DeclareOldFontCommand{\sc}{\normalfont\scshape}{\@nomath\sc}
  \newcounter{equationnumber}  
  \def\mathletters{%
     \addtocounter{equation}{1}
     \edef\@currentlabel{\theequation}%
     \setcounter{equationnumber}{\c@equation}
     \setcounter{equation}{0}%
     \edef\theequation{\@currentlabel\noexpand\alph{equation}}%
  }
    \def\BibTeX{{\rm B\kern-.05em{\sc i\kern-.025em b}\kern-.08em
                 T\kern-.1667em\lower.7ex\hbox{E}\kern-.125emX}}}{}%
\def\AmS{{\protect\usefont{OMS}{cmsy}{m}{n}%
                A\kern-.1667em\lower.5ex\hbox{M}\kern-.125emS}}}{}%
\def\@@eqncr{\let\@tempa\relax
    \ifcase\@eqcnt \def\@tempa{& & &}\or \def\@tempa{& &}%
      \else \def\@tempa{&}\fi
     \@tempa
     \if@eqnsw
        \iftag@
           \@taggnum
        \else
           \@eqnnum\stepcounter{equation}%
        \fi
     \fi
     \global\tag@false
     \global\@eqnswtrue
     \global\@eqcnt\z@\cr}
\def\TCItag{\@ifnextchar*{\@TCItagstar}{\@TCItag}}
\def\@TCItag#1{%
    \global\tag@true
    \global\def\@taggnum{(#1)}}
\def\@TCItagstar*#1{%
    \global\tag@true
    \global\def\@taggnum{#1}}
\begin{document}

\title{Algorithms and Models for Turbulence Not at Statistical Equilibrium}
\author{Nan Jiang\thanks{njiang@fsu.edu, Department of
Scientific Computing, Florida State University, Tallahassee, FL 32306. }
\and William Layton\thanks{%
wjl@pitt.edu, http://www.math.pitt.edu/\symbol{126}wjl, Department of
Mathematics, University of Pittsburgh, Pittsburgh, PA 15260, USA. Partially
supported by NSF grant DMS 1216465 and AFOSR grant FA9550-12-1-0191.}}
\date{11 October 1999}
\maketitle

\begin{abstract}
Standard eddy viscosity models, while robust, cannot represent backscatter
and have severe difficulties with complex turbulence not at statistical
equilibrium. This report gives a new derivation of eddy viscosity models
from an equation for the evolution of variance in a turbulent flow. The new
derivation also shows how to correct eddy viscosity models. The report
proves the corrected models preserve important features of the true Reynolds
stresses. It gives algorithms for their discretization including a minimally
invasive modular step to adapt an eddy viscosity code to the extended
models. A numerical test is given with the usual and over diffusive
Smagorinsky model. The correction (scaled by $10^{-8}$ ) does successfully
exhibit intermittent backscatter.
\end{abstract}

\keyphrases{eddy viscosity, backscatter, complex turbulence}


\section{Introduction}

Eddy viscosity models are the workhorses of practical turbulent flow
simulations, \cite{DP11}. Due to the wide experience with them, their
limitations are also well recognized. They cannot represent backscatter
(intermittent energy flow from turbulent fluctuations back to the mean
velocity) without ad hoc fixes (called "absurdities" in \cite{MY07}) like
negative viscosities. This report shows how to correct eddy viscosity models
systematically to include backscatter based on a new and fundamental
derivation of eddy viscosity models.

To begin, given an ensemble of initial conditions%
\begin{equation*}
u(x,0;\omega _{j})=u_{0}(x;\omega _{j}),j=1,\cdot \cdot \cdot ,J,\text{ }%
x\in \Omega ,
\end{equation*}%
let $u(x,t;\omega _{j}),p(x,t;\omega _{j})$\ be associated solutions to the
Navier-Stokes equations (NSE)%
\begin{eqnarray}
u_{t}+u\cdot \nabla u-\nu \triangle u+\nabla p &=&f(x,t)\text{, and }\nabla
\cdot u=0\text{, in }\Omega , \\
u &=&0\text{ on }\partial \Omega \text{.}  \notag
\end{eqnarray}%
Let $\left\langle \cdot \right\rangle $ denote ensemble averaging%
\begin{equation*}
\left\langle u\right\rangle (x,t):=\frac{1}{J}\sum_{j=1}^{J}u(x,t;\omega
_{j})\text{ and }u^{\prime }(x,t;\omega _{j})=u(x,t;\omega
_{j})-\left\langle u\right\rangle (x,t).
\end{equation*}%
Ensemble averaging the NSE yields the non-closed system: $\nabla \cdot
\left\langle u\right\rangle =0$\ and%
\begin{equation}
\left\langle u\right\rangle _{t}+\left\langle u\right\rangle \cdot \nabla
\left\langle u\right\rangle -\nu \triangle \left\langle u\right\rangle
-\nabla \cdot R(u,u)+\nabla \left\langle p\right\rangle =f(x,t)\text{,}
\label{eq:RaNSe}
\end{equation}%
where the Reynolds stress $R(u,u)$ is%
\begin{equation*}
R(u,u):=\left\langle u\right\rangle \otimes \left\langle u\right\rangle
-\left\langle u\otimes u\right\rangle =-\left\langle u^{\prime }\otimes
u^{\prime }\right\rangle ,
\end{equation*}%
e.g., \cite{BF10}, \cite{DP11}, \cite{MY07}. Statistical models of
turbulence begin with ensemble averaging and replace $R(u,u)$ by an enhanced
viscous term depending only on the mean velocity. We show in Section 2 that
these eddy viscosity models are based on three steps.

1. The Boussinesq assumption (from \cite{B77}, \cite{S43}, proven in \cite%
{L14}) that turbulent fluctuations (the action of $\nabla \cdot R(u,u)$\ in (%
\ref{eq:RaNSe})) are dissipative on average in (\ref{eq:RaNSe}). This is
followed by assuming that space and time \textit{averaged} dissipativity
holds \textit{pointwise} in time and space.

2. The eddy viscosity hypothesis that this dissipativity aligns with the
gradient or deformation tensor and thus can be represented by a viscous term
with a turbulent viscosity coefficient $\nu _{T}(\left\langle u\right\rangle
)$, \cite{R}.

3. Model parametrization/calibration is done by fitting the turbulent
viscosity coefficient $\nu _{T}(\left\langle u\right\rangle )$ to flow data.
Calibration is equivalent to specifying a fluctuation model for $\nabla
u^{\prime }$ in terms of $\nabla \left\langle u\right\rangle $.

The resulting eddy viscosity model (whose solution $w(x,t)$, $q(x,t)$ is
intended to be an approximation of the true flow averages $\left\langle
u\right\rangle ,\left\langle p\right\rangle $) results: $\nabla \cdot w=0$
and 
\begin{gather}
w_{t}+w\cdot \nabla w-\nabla \cdot \left( \left[ \nu +\nu _{T}(w)\right]
\nabla w\right) +\nabla q=f(x,t)\text{, in }\Omega ,  \tag{EV} \\
w=0\text{ on }\partial \Omega \text{\ and }w(x,0)=\left\langle
u_{0}\right\rangle \text{ in }\Omega \text{.}  \notag
\end{gather}%
Eddy viscosity models, with increasingly complex equations determining $\nu
_{T}(w)$, are the models of choice for most industrial turbulent flows, \cite%
{DP11}, and many parameterizations of eddy viscosity models are known, e.g., 
\cite{BIL06}, \cite{CL} \cite{HKJ00}, \cite{IL98}, \cite{LL02}, \cite{LRT12}%
, \cite{MP}, \cite{V04}. They have well recognized limitations in not
modeling complex turbulence, backscatter or turbulence not at statistical
equilibrium, e.g., \cite{FD02}, \cite{LN92}, \cite{S07}, \cite{Starr}. (The
second assumption that the dissipativity of the Reynolds stress term aligns
with $\nabla \left\langle u\right\rangle $ also fails for some flows, \cite%
{LN92}, but is not the issue addressed herein.) 

The correction required for eddy viscosity models\ to represent backscatter
in non-statistically stationary turbulence, the case when the action of the
fluctuations is intermittently non-dissipative, is derived and analyzed
herein. Given the eddy viscosity parameterization $\nu _{T}(w),$ choose a
re-scaling parameter $\beta >0$ and define%
\begin{equation*}
a(w):=\sqrt{\nu ^{-1}\nu _{T}(w)}.
\end{equation*}%
The corrected EV model (derived in Section 2) is then $\nabla \cdot w=0$ and%
\begin{gather}
w_{t}+\beta ^{2}a(w)\frac{\partial }{\partial t}\left( a(w)w\right) +w\cdot
\nabla w  \tag{Corrected EV} \\
-\nabla \cdot \left( \left[ \nu +\nu _{T}(w)\right] \nabla w\right) +\nabla
q=f(x,t)\text{.}  \notag
\end{gather}%
In Section 3 time averaged dissipativity, an important feature of the true\
Reynolds stresses, is proven to be preserved in (Corrected EV).

The (Corrected EV) differs from (EV) by the extra term $\beta ^{2}a(w)\frac{%
\partial }{\partial t}\left( a(w)w\right) $. This term means time
discretization introduces new issues, especially in adapting legacy codes
from (EV) to (Corrected EV). Section 4 shows how time discretization can be
done and preserve these important model properties, including the important
case of modular adaptation of a legacy code available for (EV).
Phenomenology is used to obtain some insight into calibration of the
re-scaling parameter $\beta $\ in Section 5. Section 6 tests correction of
the over-diffused Smagorinsky model. Even with a very small rescaling of the
correction, $\beta ^{2}\simeq O(10^{-8})$, the numerical test shows that the
corrected model does exhibit backscatter.

\section{Derivation of Corrected EV Models}

Beginning with two results from \cite{L14}, this section shows that eddy
viscosity models are based on three assumptions and that they are only
consistent in a time averaged sense. Next we show how to take a given eddy
viscosity model and extend it to be energy consistent with the NSE pointwise
in time.

The ensemble averaged Navier Stokes equations, (\ref{eq:RaNSe}) above,
involve the non-closed Reynolds stress $R(u,u)=-\left\langle u^{\prime
}\otimes u^{\prime }\right\rangle $. This term, which must be modelled,
accounts for the effects of the fluctuations on the mean flow, e.g., \cite%
{CL}, \cite{DP11}, \cite{MP}, \cite{R}, \cite{S01}. Let $||\cdot ||,(\cdot
,\cdot )$ denote the usual $L^{2}(\Omega )$ norm and inner product. Taking
the inner product of (\ref{eq:RaNSe}) with $\left\langle u\right\rangle $
and performing the usual steps gives the equation for the kinetic energy
evolution of $\left\langle u\right\rangle $:%
\begin{equation*}
\frac{d}{dt}\frac{1}{2}||\left\langle u\right\rangle ||^{2}+\nu ||\nabla
\left\langle u\right\rangle ||^{2}+\int_{\Omega }R(u,u):\nabla \left\langle
u\right\rangle dx=(f,\left\langle u\right\rangle ).
\end{equation*}%
Thus, the effect of fluctuations on the mean flow is determined by the sign
of%
\begin{equation*}
RS(t):=\int_{\Omega }R(u,u):\nabla \left\langle u\right\rangle dx.
\end{equation*}%
When $RS(t)>0$, the effect of $R(u,u)$ is dissipative while when $RS(t)<0$,
(volume averaged) backscatter occurs and fluctuations increase the energy in
the mean flow. In \cite{L14}, \cite{JKL14}, two key properties of this
Reynolds stress term were proven: time averaged dissipativity and an
equation for the evolution of variance of fluctuations: 
\begin{gather}
LIM_{T\rightarrow \infty }\frac{1}{T}\int_{0}^{T}RS(t)dt=LIM_{T\rightarrow
\infty }\frac{1}{T}\int_{0}^{T}\int_{\Omega }\nu \left\langle |\nabla
u^{\prime }|^{2}\right\rangle dxdt\geq 0,  \label{eq:BH} \\
\int_{\Omega }R(u,u):\nabla \left\langle u\right\rangle dx=\frac{1}{2}\frac{d%
}{dt}\int \left\langle u^{\prime }\cdot u^{\prime }\right\rangle dx+\nu \int
\left\langle \nabla u^{\prime }:\nabla u^{\prime }\right\rangle dx.
\label{eq:CovarEq2}
\end{gather}%
Eddy viscosity models then follow from three assumptions. First, the
statistical equilibrium assumption that dissipativity holds approximately at
every instant in time%
\begin{equation}
\int_{\Omega }R(u,u):\nabla \left\langle u\right\rangle dx\simeq
\int_{\Omega }\nu \left\langle |\nabla u^{\prime }|^{2}\right\rangle dx.
\label{eq:StatSteadyState}
\end{equation}%
The second is that $\nabla u^{\prime }$\ aligns with $\nabla \left\langle
u\right\rangle $. Third, calibration\footnote{%
Alternately, the dissipation in (2.3) is $\left\langle \nu |\nabla u^{\prime
}|^{2}\right\rangle $ is replaced in the model by $\nu _{T}(\left\langle
u\right\rangle )|\nabla \left\langle u\right\rangle |^{2}$. Ideally, then $%
\left\langle |\nabla u^{\prime }|^{2}\right\rangle \simeq \nu ^{-1}\nu
_{T}(\left\langle u\right\rangle )|\nabla \left\langle u\right\rangle |^{2}$
so $a(\left\langle u\right\rangle )=\sqrt{\nu ^{-1}\nu _{T}(\left\langle
u\right\rangle )}$.} provides a model of the fluctuations in terms of the
mean flow 
\begin{equation}
action(\nabla u^{\prime })\simeq a(\left\langle u\right\rangle )\nabla
\left\langle u\right\rangle .  \label{eq:FluctModel}
\end{equation}%
Thus,%
\begin{gather*}
\int_{\Omega }R(u,u):\nabla \left\langle u\right\rangle dx\simeq
\int_{\Omega }\nu a(\left\langle u\right\rangle )^{2}\nabla \left\langle
u\right\rangle :\nabla \left\langle u\right\rangle dx \\
=\int_{\Omega }-\nabla \cdot \left( \nu a(\left\langle u\right\rangle
)^{2}\nabla \left\langle u\right\rangle \right) \cdot vdx\text{ evaluated at 
}v=\left\langle u\right\rangle .
\end{gather*}%
Letting $\nu _{T}(\left\langle u\right\rangle )=\nu a(\left\langle
u\right\rangle )^{2}$, this yields the eddy \ viscosity closure 
\begin{equation*}
-\nabla \cdot R(u,u)\text{ }\Leftarrow \text{ }-\nabla \cdot \left( \nu
_{T}(\left\langle u\right\rangle )\nabla \left\langle u\right\rangle \right)
+\text{ terms incorporated in }\nabla p\text{.}
\end{equation*}

Far from equilibrium, step (\ref{eq:StatSteadyState}) omits $\frac{1}{2}%
\frac{d}{dt}\int \left\langle u_{j}^{\prime }\cdot u_{j}^{\prime
}\right\rangle dx$ in (\ref{eq:CovarEq2}). This is the term that accounts
for backscatter and other non-equilibrium effects. To model this term, $%
u^{\prime }$ must be expressed in terms of $\left\langle u\right\rangle $.
For this, the simplest (explored herein) is to rescale (by $\beta $, Section
4) the fluctuation model (\ref{eq:FluctModel}), yielding 
\begin{equation*}
action(u^{\prime })\simeq \beta a(\left\langle u\right\rangle )\left\langle
u\right\rangle 
\end{equation*}%
This assumption yields%
\begin{equation*}
\int R(u,u):\nabla \left\langle u\right\rangle dx\simeq \frac{1}{2}\frac{d}{%
dt}\int \beta ^{2}a(\left\langle u\right\rangle )^{2}|\left\langle
u\right\rangle (x,t)|^{2}dx+\int \nu a(\left\langle u\right\rangle
)^{2}|\nabla \left\langle u\right\rangle (x,t)|^{2}dx,
\end{equation*}%
arising from an anisotropic time derivative $\beta ^{2}a(\left\langle
u\right\rangle )\frac{\partial }{\partial t}\left( a\left\langle
u\right\rangle )\left\langle u\right\rangle \right) $ and an eddy viscosity
term $-\nabla \cdot \left( \nu _{T}(\left\langle u\right\rangle )\nabla
\left\langle u\right\rangle \right) $ where $\nu _{T}(\left\langle
u\right\rangle )=\nu a(\left\langle u\right\rangle )^{2}.$ This gives the
closure model%
\begin{equation*}
-\nabla \cdot R(u,u)\simeq \beta ^{2}a(\left\langle u\right\rangle )\frac{%
\partial }{\partial t}\left( a(\left\langle u\right\rangle )\left\langle
u\right\rangle \right) -\nabla \cdot \left( \nu _{T}\left\langle
u\right\rangle )\nabla \left\langle u\right\rangle \right) 
\end{equation*}%
and thus we have the corrected model: $\nabla \cdot w=0$\ and%
\begin{gather}
\ w_{t}+\beta ^{2}a(w)\frac{\partial }{\partial t}\left( a(w)w\right)
+w\cdot \nabla w+\nabla p-\nabla \cdot \left( \lbrack \nu +\nu
_{T}(w)]\nabla w\right) =f\text{ }.
\end{gather}

\section{Analysis of The Corrected Smagorinsky Model}

The classic example of an over-diffused model is the standard Smagorinsky
model for which%
\begin{equation*}
\nu _{T}(w)=\left( C_{s}\delta \right) ^{2}|\nabla w|,\text{ \ where \ }%
C_{s}\simeq 0.1,\delta =\triangle x.
\end{equation*}%
(Other numerical values of $C_{s}$ are also used, \cite{Smag93} Table 1.)
Various fixes for it include van Driest damping (reducing near wall model
dissipation) and Germano's dynamic selection of $\mu =C_{s}^{2}(x,t)$. These
are often successful but the latter leads to negative values of $\mu $\ that
model backscatter but can induce instabilities. Often these are clipped ($%
\mu \Leftarrow \max \{\mu ,0\}$) eliminating backscatter being represented
in the model. 

In this section we prove that the corrected Smagorinsky model preserves the
property of the true turbulent fluctuations that on long time average they
are dissipative, Theorem 3.2. For the Smagorinsky model we have%
\begin{equation*}
\nu _{T}(w)=\left( C_{s}\delta \right) ^{2}|\nabla w|,\text{ and thus }%
a(w)=\nu ^{-1/2}C_{s}\delta \sqrt{|\nabla w|}.
\end{equation*}%
Let $\beta >0$, $a(w)=\nu ^{-1/2}C_{s}\delta \sqrt{|\nabla w|}$ and consider%
\begin{equation}
\left. 
\begin{array}{c}
\ w_{t}+\beta ^{2}a(w)\frac{\partial }{\partial t}\left( a(w)w\right)
+w\cdot \nabla w+\nabla p-\nabla \cdot \left( \lbrack \nu +\nu
_{T}(w)]\nabla w\right) =f, \\ 
\text{ }\nabla \cdot w=0\text{ in }\Omega \times (0,\infty ), \\ 
w=0\text{ on }\partial \Omega ,w(x,0)=w_{0}(x),\text{ in }\Omega .%
\end{array}%
\right\}   \label{eq:ExtendedSmag}
\end{equation}%
The model dissipation function for the effect of the model of the Reynolds
stresses on the kinetic energy in the mean flow will be denoted by%
\begin{equation*}
MD(t):=\int_{\Omega }\beta ^{2}a(w)\frac{\partial }{\partial t}\left(
a(w)w\right) \cdot w+\nu _{T}(w)|\nabla w|^{2}dx.
\end{equation*}%
The numerical tests in Section 6 establish (Figure 6.2) that, even for very
small $\beta $, the extra term does allow the sign of the model dissipation $%
MD(t)$ to fluctuate. We prove the time average of $MD(t)$\ is non-negative. 

We assume%
\begin{gather*}
f(x,t)\in L^{\infty }(0,\infty ;L^{2}(\Omega )),w_{0}(x)\in L^{2}(\Omega ),
\\
w\text{ is a strong solution of (3.1).}
\end{gather*}

\begin{lemma}
For $a(w)=\nu ^{-\frac{1}{2}}C_{s}\delta \sqrt{|\nabla w|}$ we have for all $%
w\in \mathring{W}^{1,3}(\Omega )$%
\begin{equation*}
||a(w)w||^{2}\leq C(\Omega )\nu ^{-1}\left( C_{s}\delta \right) ^{2}||\nabla
w||_{L^{3}}^{3}.
\end{equation*}
\end{lemma}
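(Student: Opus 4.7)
The plan is to unpack the definition of $a(w)$, separate the resulting $L^3$ factors via Hölder, and finish with a Poincaré inequality tailored to $\mathring{W}^{1,3}(\Omega)$.

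First, by the definition $a(w)=\nu^{-1/2}C_s\delta\sqrt{|\nabla w|}$ we have $a(w)^2 = \nu^{-1}(C_s\delta)^2|\nabla w|$, so
\begin{equation*}
\|a(w)w\|^2 \;=\; \int_\Omega a(w)^2 |w|^2\,dx \;=\; \nu^{-1}(C_s\delta)^2 \int_\Omega |\nabla w|\,|w|^2\,dx.
\end{equation*}
Hence the lemma reduces to proving the purely geometric bound $\int_\Omega |\nabla w|\,|w|^2\,dx \leq C(\Omega)\,\|\nabla w\|_{L^3}^3$.

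Next I would apply Hölder's inequality to the product $|\nabla w|\cdot |w|^2$ with conjugate exponents $3$ and $3/2$. This yields
\begin{equation*}
\int_\Omega |\nabla w|\,|w|^2\,dx \;\leq\; \|\nabla w\|_{L^3}\,\bigl\||w|^2\bigr\|_{L^{3/2}} \;=\; \|\nabla w\|_{L^3}\,\|w\|_{L^3}^2.
\end{equation*}
It then remains to absorb the $\|w\|_{L^3}^2$ factor into $\|\nabla w\|_{L^3}^2$, which is exactly where the boundary condition baked into $\mathring{W}^{1,3}(\Omega)$ is used.

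Since $w$ has zero trace on $\partial\Omega$, the Poincaré inequality in $W_0^{1,3}(\Omega)$ gives $\|w\|_{L^3} \leq C_P(\Omega)\,\|\nabla w\|_{L^3}$. Substituting this into the previous display and then into the identity for $\|a(w)w\|^2$ yields
\begin{equation*}
\|a(w)w\|^2 \;\leq\; \nu^{-1}(C_s\delta)^2\, C_P(\Omega)^2\, \|\nabla w\|_{L^3}^3,
\end{equation*}
which is the claim with $C(\Omega):=C_P(\Omega)^2$. The argument is entirely routine; the only conceptual point to flag is that the zero-trace class $\mathring{W}^{1,3}(\Omega)$ is precisely what legitimizes the Poincaré step, so there is no real obstacle to overcome—just a careful bookkeeping of the exponents in Hölder to land on the $L^3$ norm of $\nabla w$ on both sides.
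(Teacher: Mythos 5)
Your argument is correct and is exactly the paper's proof: expand $a(w)^2$, apply H\"{o}lder with exponents $3$ and $3/2$ to get $\|w\|_{L^3}^2\|\nabla w\|_{L^3}$, then absorb $\|w\|_{L^3}$ via the Poincar\'{e}--Friedrichs inequality on $\mathring{W}^{1,3}(\Omega)$. No differences worth noting.
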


\begin{proof}
By H\"{o}lder's inequality and the Poincar\'{e}-Friedrichs inequality%
\begin{gather*}
||a(w)w||^{2}=\nu ^{-1}\left( C_{s}\delta \right) ^{2}\int_{\Omega }|\nabla
w||w|^{2}dx \\
\leq \nu ^{-1}\left( C_{s}\delta \right) ^{2}||w||_{L^{3}}^{2}||\nabla
w||_{L^{3}}\leq C(\Omega )\nu ^{-1}\left( C_{s}\delta \right) ^{2}||\nabla
w||_{L^{3}}^{3}.
\end{gather*}
\end{proof}

We prove next that $MD(t)$ dissipates energy in the time average sense.

\begin{theorem}
Let $C_{s}>0,\delta >0,\beta >0$\ and $w$ be a strong solution of (\ref%
{eq:ExtendedSmag}). Then%
\begin{eqnarray}
w\text{ , }a(w)w &\in &L^{\infty }(0,\infty ;L^{2}(\Omega )),
\label{eq:apriori1} \\
\frac{1}{T}\int_{0}^{T}\left( \int_{\Omega }\left[ \nu +\nu _{T}(w\right]
)\nabla w:\nabla wdx\right) dt &\leq &C<\infty ,\text{ }C\text{ independent
of }T\text{.}  \label{eq:a-priori2}
\end{eqnarray}%
The model also satisfies long term balance between energy input and
dissipation: for any generalized limit $LIM$%
\begin{equation*}
\ LIM_{T\rightarrow \infty }\frac{1}{T}\int_{0}^{T}\left( \int_{\Omega }%
\left[ \nu +\nu _{T}(w\right] )|\nabla w|^{2}dx\right) dt=LIM_{T\rightarrow
\infty }\frac{1}{T}\int_{0}^{T}(f,w)dt.
\end{equation*}%
Further%
\begin{equation*}
0\leq \lim_{T\rightarrow \infty }\inf \frac{1}{T}\int_{0}^{T}MD(t)dt\leq
\lim_{T\rightarrow \infty }\sup \frac{1}{T}\int_{0}^{T}MD(t)dt<\infty .
\end{equation*}
\end{theorem}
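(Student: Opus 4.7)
The plan is to derive the fundamental energy equation for the corrected model, extract the two a priori bounds \eqref{eq:apriori1}--\eqref{eq:a-priori2} by a Gronwall-type absorbing-set argument anchored on Lemma 3.1, and then obtain the long-time statements by dividing by $T$ and sending $T\to\infty$. First I would take the $L^{2}$ inner product of \eqref{eq:ExtendedSmag} with $w$. The convective term $(w\cdot\nabla w,w)$ and the pressure term $(\nabla p,w)$ vanish by incompressibility and the no-slip boundary condition, and the viscous/eddy viscous term produces $\int_{\Omega}[\nu+\nu_{T}(w)]|\nabla w|^{2}\,dx$. The critical algebraic observation is that the anisotropic time-derivative term is a perfect time derivative of $\|a(w)w\|^{2}$:
\begin{equation*}
  \beta^{2}\int_{\Omega} a(w)\,\partial_{t}(a(w)w)\cdot w\,dx
  \;=\; \beta^{2}\int_{\Omega}(a(w)w)\cdot\partial_{t}(a(w)w)\,dx
  \;=\; \frac{\beta^{2}}{2}\frac{d}{dt}\|a(w)w\|^{2}.
\end{equation*}
This yields the pointwise-in-time energy identity
\begin{equation*}
  \tfrac{1}{2}\tfrac{d}{dt}\!\left[\|w\|^{2}+\beta^{2}\|a(w)w\|^{2}\right]
  +\int_{\Omega}[\nu+\nu_{T}(w)]|\nabla w|^{2}\,dx = (f,w).
\end{equation*}

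Next I would bound the forcing by Cauchy--Schwarz, Poincar\'e--Friedrichs, and Young's inequality so that $|(f,w)|\leq \frac{\nu}{2}\|\nabla w\|^{2}+C\nu^{-1}\|f\|^{2}$, absorbing half of the molecular dissipation. To upgrade the resulting $L^{2}_{t}$ dissipation bound to a uniform-in-time $L^{\infty}_{t}$ bound on $E(t):=\tfrac{1}{2}\|w\|^{2}+\tfrac{\beta^{2}}{2}\|a(w)w\|^{2}$, the key ingredient is Lemma 3.1, which gives $\|a(w)w\|^{2}\leq C\nu^{-1}\!\int_{\Omega}\nu_{T}(w)|\nabla w|^{2}\,dx$. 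Combining this with the Poincar\'e estimate $\|w\|^{2}\leq C_{p}\|\nabla w\|^{2}$ lets us dominate $\alpha E(t)$ by the dissipation on the left-hand side, provided $\alpha$ is sufficiently small. A scalar Gronwall inequality then produces $E(t)\leq E(0)e^{-\alpha t}+C\alpha^{-1}\|f\|_{L^{\infty}_{t}L^{2}_{x}}^{2}$, which establishes \eqref{eq:apriori1}. Integrating the energy equation on $[0,T]$ and dividing by $T$ then gives \eqref{eq:a-priori2}.

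For the long-term energy balance I would integrate the energy identity over $[0,T]$, divide by $T$, and use the just-proved uniform bound on $E(T)$ to note that $E(T)/T\to 0$ and $E(0)/T\to 0$; applying any generalized limit to both sides yields the claimed identity. For the final statement about $MD(t)$, the perfect-derivative identity above gives
\begin{equation*}
  \tfrac{1}{T}\!\int_{0}^{T}\! MD(t)\,dt
  = \tfrac{\beta^{2}}{2T}\!\left[\|a(w(T))w(T)\|^{2}-\|a(w_{0})w_{0}\|^{2}\right]
  +\tfrac{1}{T}\!\int_{0}^{T}\!\!\int_{\Omega}\nu_{T}(w)|\nabla w|^{2}\,dx\,dt.
\end{equation*}
The bracketed endpoint piece divided by $T$ tends to $0$ by the uniform boundedness of $\|a(w(t))w(t)\|^{2}$, while the remaining integral is pointwise non-negative and, by \eqref{eq:a-priori2}, has bounded time average. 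Taking $\liminf$ and $\limsup$ gives the desired two-sided bound.

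The main obstacle is securing the uniform-in-time control of $\|a(w)w\|^{2}$; without it, one cannot conclude that the endpoint contribution at $t=T$ is $o(T)$, and the non-negativity of the $MD$ average could be lost. The reason Lemma 3.1 is exactly the right estimate is that its $\|\nabla w\|_{L^{3}}^{3}$ bound matches the Smagorinsky dissipation $\int_{\Omega}\nu_{T}(w)|\nabla w|^{2}\,dx=(C_{s}\delta)^{2}\|\nabla w\|_{L^{3}}^{3}$ appearing on the energy side, making the Gronwall absorption possible with no additional hypotheses on $w$ beyond being a strong solution.
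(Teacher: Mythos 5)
Your proposal is correct and follows essentially the same route as the paper: the same energy identity from testing with $w$, the same use of Lemma 3.1 together with Poincar\'e--Friedrichs to close a differential inequality $y'+\alpha y\le F$ giving the uniform-in-time bound, and the same treatment of the $MD(t)$ average via the perfect-derivative structure with $O(1/T)$ endpoint terms and a nonnegative $\|\nabla w\|_{L^{3}}^{3}$ remainder. No substantive differences to report.
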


\begin{proof}
Take the inner product of the corrected Smagorinsky model with $w$. This
yields%
\begin{equation}
\ \frac{1}{2}\frac{d}{dt}\left[ ||w||^{2}+\beta ^{2}||a(w)w||^{2}\right]
+\int_{\Omega }\left[ \nu +\nu _{T}(w)\right] |\nabla w|^{2}dx=(f,w).
\label{eq:EnergyEstforModel}
\end{equation}%
Let%
\begin{equation*}
F:=\frac{1}{2\nu }||f||_{L^{\infty }(0,\infty ;L^{2}(\Omega ))}\text{ \ and
\ }y(t):=\ \frac{1}{2}\left[ ||w||^{2}+\beta ^{2}||a(w)w||^{2}\right] .
\end{equation*}%
By Lemma 3.1,%
\begin{eqnarray*}
2y(t) &=&||w||^{2}+\beta ^{2}||a(w)w||^{2}\leq C\left( \int_{\Omega }\left[
\nu +\nu _{T}(w)\right] |\nabla w|^{2}dx\right)  \\
&\leq&C\left( \nu ||\nabla w||^{2}+\left( C_{s}\delta \right) ^{2}||\nabla
w||_{L^{3}}^{3}\right) .
\end{eqnarray*}
Since $(f,w)\leq \frac{\nu }{2}||\nabla w||^{2}+\frac{1}{2\nu }||f||_{-1}^{2}
$, $y(t)$ thus satisfies%
\begin{equation*}
y^{\prime }(t)+\alpha y(t)\leq F(<\infty )\text{ for some }\alpha >0.
\end{equation*}%
This implies $y(t)\in \ L^{\infty }(0,\infty )$ which is the first claimed 
\'{a} priori bound. This bound now implies that $(f,w)(t)\in L^{\infty
}(0,\infty ).$ Integrating (\ref{eq:EnergyEstforModel}) over $[0,T]$ and
dividing by $T$ gives%
\begin{gather}
\ \frac{1}{2T}\left[ ||w||^{2}+\beta ^{2}||a(w)w||^{2}\right] (T)+\frac{1}{T}%
\int_{0}^{T}\left( \int_{\Omega }\left[ \nu +\nu _{T}(w\right] )|\nabla
w|^{2}dx\right) dt  \label{eq:TimeAveragedEnergyEqn} \\
=\frac{1}{2T}\left[ ||w_{0}||^{2}+\beta ^{2}||a(w_{0})w_{0}||^{2}\right] +%
\frac{1}{T}\int_{0}^{T}(f,w)dt.  \notag
\end{gather}%
Since $(f,w)\in L^{\infty }(0,\infty )$ this implies (\ref{eq:a-priori2})
holds which implies as $T\rightarrow \infty $ \ limit inferiors and
superiors exist. The \'{a} priori estimates (\ref{eq:apriori1}), (\ref%
{eq:a-priori2}) imply that (\ref{eq:TimeAveragedEnergyEqn}) takes the form%
\begin{equation}
\ \mathcal{O}(\frac{1}{T})+\frac{1}{T}\int_{0}^{T}\left( \int_{\Omega }\left[
\nu +\nu _{T}(w\right] )|\nabla w|^{2}dx\right) dt=\mathcal{O}(\frac{1}{T})+%
\frac{1}{T}\int_{0}^{T}(f,w)dt.  \label{eqTimeAveraged2}
\end{equation}%
Letting $T\rightarrow \infty $ implies long term balance between energy
input and dissipation.

Consider now the time average of $MD(t)$. We have%
\begin{eqnarray*}
\frac{1}{T}\int_{0}^{T}MD(t)dt &=&\frac{1}{T}\int_{0}^{T}\int_{\Omega }\left[
\beta ^{2}\frac{\partial }{\partial t}\left( a(w)w\right) \cdot (a(w)w)+\nu
_{T}(w)|\nabla w|^{2}\right] dxdt \\
&=&\frac{1}{T}\int_{0}^{T}\left[ \frac{\beta ^{2}}{2}\frac{d}{dt}%
||a(w)w||^{2}+\left( C_{s}\delta \right) ^{2}||\nabla w||_{L^{3}}^{3}\right]
dt \\
&=&\frac{\beta ^{2}}{2T}\left(
||a(w(T))w(T)||^{2}-||a(w(0))w(0)||^{2}\right) +\frac{1}{T}%
\int_{0}^{T}\left( C_{s}\delta \right) ^{2}||\nabla w||_{L^{3}}^{3}dt.
\end{eqnarray*}%
From (\ref{eq:TimeAveragedEnergyEqn}), the right hand side equals%
\begin{gather*}
\frac{1}{T}\int_{0}^{T}MD(t)dt=\frac{1}{T}\int_{0}^{T}(f,w)dt-\frac{1}{T}%
\int_{0}^{T}\nu ||\nabla w||^{2}dt - \frac{1}{2T} \left[ || w ||^2 (T) -
||w_0||^2\right] ,
\end{gather*}%
while from (\ref{eqTimeAveraged2}) we have%
\begin{eqnarray*}
\frac{1}{T}\int_{0}^{T}MD(t)dt &=&\frac{1}{T}\int_{0}^{T}(f,w)dt-\frac{1}{T}%
\int_{0}^{T}\nu ||\nabla w||^{2}dt \\
&=&\ \mathcal{O}(\frac{1}{T})-\mathcal{O}(\frac{1}{T})+\frac{1}{T}%
\int_{0}^{T}\left( C_{s}\delta \right) ^{2}||\nabla w||_{L^{3}}^{3}dt.
\end{eqnarray*}%
Thus the limit inferior of the time average of $MD(t)$ exists and is
non-negative.
\end{proof}

\section{Time Discretization of Corrected Eddy Viscosity Models}

This section presents three unconditionally stable, linearly implicit time
discretizations of (Corrected EV). Method 1 and the modular Method 2 are
first order accurate. Method 3 is second order accurate. All three preserve
the essential feature of time averaged dissipativity, Proposition 4.3. The
second method shows how a legacy code for solving (EV) can be adapted to
solve (Corrected EV). We suppress the spacial discretization to reduce
notation and focus on essential points. Let the timestep and associated
quantities be denoted as usual by%
\begin{gather*}
\text{timestep}=k,\text{ \ \ }t_{n}=nk,\text{ \ \ }f^{n}(x):=f(x,t_{n}), \\
w^{n}(x)=\text{ approximation to }w(x,t_{n}),\text{ and }a^{n}:=a(w^{n}),%
\text{ \ \ }\nu _{T}^{n}:=\nu _{T}(w^{n}),n\geq 0,
\end{gather*}%
and set $a^{-1}=a^{0}=a(w^{0})$. We consider time discretization of \
(Corrected EV) under no-slip boundary conditions. The first method is: given 
$w^{0},$ find $w^{n}$ satisfying%
\begin{gather}
\frac{w^{n+1}-w^{n}}{k}+\beta ^{2}a^{n}\frac{a^{n}w^{n+1}-a^{n-1}w^{n}}{k} 
\notag \\
+w^{n}\cdot \nabla w^{n+1}-\nabla \cdot \left( \left[ \nu +\nu _{T}^{n}%
\right] \nabla w^{n+1}\right) +\nabla q^{n+1}=f^{n+1}(x)\text{ in }\Omega , 
\tag{Method 1} \\
\nabla \cdot w^{n+1}=0\text{ in }\Omega ,\text{ and }w^{n+1}=0\text{ on }%
\partial \Omega \text{ .}  \notag
\end{gather}%
This method is linearly implicit. We shall prove in Theorem 4.1 that it is
unconditionally, nonlinearly stable. In linearly implicit methods
(considered herein) nonlinearities are lagged to previous time levels (or
extrapolated), so there is no difficulty if $\nu _{T}$ is determined by
solving other nonlinear equations, such as in the $k-\varepsilon $ model, 
\cite{CL}, \cite{MP}.

Comparing the model, this method and the ensemble averaged NSE, we see that

1. True effect of fluctuations on means: $RS(t)=$ $\int_{\Omega
}R(u,u):\nabla \left\langle u\right\rangle dx.$

2. Model: $MD(t)=$ $\int_{\Omega }\beta ^{2}a(w)\frac{\partial }{\partial t}%
\left( a(w)w\right) \cdot w+\nu _{T}(w)|\nabla w|^{2}dx,$

3. Discrete version: $MD^{n+1}=$ $\int_{\Omega }\beta ^{2}a^{n}\frac{%
a^{n}w^{n+1}-a^{n-1}w^{n}}{k}\cdot w^{n+1}+\nu _{T}^{n}|\nabla
w^{n+1}|^{2}dx $.

\begin{theorem}
(Method 1) is unconditionally, nonlinearly energy stable. For any $N\geq 1$%
\begin{gather*}
\left( \frac{1}{2}||w^{N}||^{2}+\frac{1}{2}\beta^2||a^{N-1}w^{N}||^{2}\right)
\\
+\sum_{n=0}^{N-1}\frac{1}{2}\left( ||w^{n+1}-w^{n}||^{2}+\beta
^{2}||a^{n}w^{n+1}-a^{n-1}w^{n}||^{2}\right) \\
+k\sum_{n=0}^{N-1}\int_{\Omega }\left[ \nu +\nu _{T}^{n}\right] |\nabla
w^{n+1}|^{2}dx \\
=\left( \frac{1}{2}||w^{0}||^{2}+\frac{1}{2}\beta^2||a^{-1}w^{0}||^{2}\right)
+k\sum_{n=0}^{N-1}(f^{n+1},w^{n+1}).
\end{gather*}
\end{theorem}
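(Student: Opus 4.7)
The plan is to derive the claimed identity as an exact energy equality by testing (Method 1) against $w^{n+1}$, identifying the energy structure of each term, and then summing over $n$. Because the identity as stated in the theorem contains no inequality, no Young/Cauchy--Schwarz absorption is needed; the whole argument is algebraic once the right polarization identities are applied.

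First I would take the $L^2$ inner product of (Method 1) with $w^{n+1}$ and handle the classical terms. The convection term vanishes by the usual skew-symmetry argument: since $\nabla\cdot w^n = 0$ and $w^{n+1}$ vanishes on $\partial\Omega$, $(w^n\cdot\nabla w^{n+1},w^{n+1}) = 0$. The pressure term $(\nabla q^{n+1},w^{n+1})$ vanishes because $\nabla\cdot w^{n+1}=0$ together with the no-slip boundary condition. Integration by parts on the diffusion term produces $\int_\Omega [\nu+\nu_T^n]|\nabla w^{n+1}|^2\,dx$. The standard time-difference term gives, via the polarization identity $(a-b,a)=\tfrac{1}{2}(\|a\|^2-\|b\|^2+\|a-b\|^2)$,
\begin{equation*}
\Big(\frac{w^{n+1}-w^n}{k},w^{n+1}\Big) = \frac{1}{2k}\big(\|w^{n+1}\|^2 - \|w^n\|^2 + \|w^{n+1}-w^n\|^2\big).
\end{equation*}

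The step I expect to be the main (and really only) subtlety is the anisotropic time-derivative term. Since $a^n=a(w^n)$ is a scalar field, one $a^n$ can be moved onto the test function:
\begin{equation*}
\Big(\beta^2 a^n\,\frac{a^n w^{n+1}-a^{n-1}w^n}{k},\,w^{n+1}\Big) = \frac{\beta^2}{k}\big(a^n w^{n+1}-a^{n-1}w^n,\;a^n w^{n+1}\big).
\end{equation*}
Applying the same polarization identity with $a=a^n w^{n+1}$ and $b=a^{n-1}w^n$ then gives
\begin{equation*}
\frac{\beta^2}{2k}\big(\|a^n w^{n+1}\|^2 - \|a^{n-1}w^n\|^2 + \|a^n w^{n+1}-a^{n-1}w^n\|^2\big).
\end{equation*}
This is the key identity that makes the anisotropic term telescope cleanly; it is the reason the scheme is written with the particular non-symmetric product $a^n a^{n-1}$ in the discrete derivative.

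Finally I would multiply the resulting per-step equation by $k$ and sum from $n=0$ to $N-1$. The two quadratic pairs telescope: the $\|w^{n+1}\|^2-\|w^n\|^2$ terms collapse to $\|w^N\|^2-\|w^0\|^2$, and the $\|a^n w^{n+1}\|^2-\|a^{n-1}w^n\|^2$ terms collapse to $\|a^{N-1}w^N\|^2-\|a^{-1}w^0\|^2$ (using the convention $a^{-1}=a^0$). Rearranging to place the positive terms at step $N$ and the increments under the sum on the left, and moving the initial data and forcing work to the right, yields exactly the identity in the theorem. Unconditional nonlinear energy stability follows because every term on the left is non-negative, independent of the size of $k$.
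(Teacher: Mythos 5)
Your proposal is correct and follows essentially the same route as the paper: test with $w^{n+1}$, kill the convection and pressure terms, apply the polarization identity to both the standard and the anisotropic time-difference terms, and telescope the sum. The paper's proof is the same argument written with the identity in the form $(w^{n+1},w^{n})=\tfrac12\|w^{n+1}\|^{2}+\tfrac12\|w^{n}\|^{2}-\tfrac12\|w^{n+1}-w^{n}\|^{2}$, which is just your version rearranged.
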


\begin{proof}
Multiply through by $k$, take the $L^{2}$ inner product of (Method 1) with $%
w^{n+1}$ and use $(w^{n}\cdot \nabla w^{n+1},w^{n+1})=0$. This gives 
\begin{gather*}
||w^{n+1}||^{2}-(w^{n+1},w^{n})+\beta^2||a^{n}w^{n+1}||^{2}-%
\beta^2(a^{n-1}w^{n},a^{n}w^{n+1}) \\
+k\int_{\Omega }\left[ \nu +\nu _{T}^{n}\right] |\nabla
w^{n+1}|^{2}dx=k\int_{\Omega }f^{n+1}\cdot w^{n+1}dx.
\end{gather*}%
The second and fourth terms are treated by the polarization identity%
\begin{eqnarray*}
(w^{n+1},w^{n}) &=&\frac{1}{2}||w^{n+1}||^{2}+\frac{1}{2}||w^{n}||^{2}-\frac{%
1}{2}||w^{n+1}-w^{n}||^{2}, \\
(a^{n-1}w^{n},a^{n}w^{n+1}) &=&\frac{1}{2}||a^{n}w^{n+1}||^{2}+\frac{1}{2}%
||a^{n-1}w^{n}||^{2}-\frac{1}{2}||a^{n}w^{n+1}-a^{n-1}w^{n}||^{2}.
\end{eqnarray*}%
Collecting terms and summing from $n=0$ to $N-1$ finishes the proof.
\end{proof}

Since Theorem 4.1 is an energy equality we can identify various effects:

\begin{itemize}
\item Model kinetic energy = $\frac{1}{2}||w^{N}||^{2}+\frac{1}{2}\beta^2
||a^{N-1}w^{N}||^{2}$
\end{itemize}

\begin{itemize}
\item model dissipation = $\int_{\Omega }\nu _{T}^{n}|\nabla w^{n+1}|^{2}dx$
\end{itemize}

\begin{itemize}
\item numerical diffusion = $\frac{1}{2}\left( ||w^{n+1}-w^{n}
||^{2}+\beta^2||a^{n}w^{n+1}-a^{n-1}w^{n}||^{2}\right) .$
\end{itemize}

Rewriting the energy equality in an equivalent form gives%
\begin{gather*}
\frac{1}{2k}\left( ||w^{n+1}||^{2}-||w^{n}||^{2}\right) +\frac{1}{2k}%
||w^{n+1}-w^{n}||^{2}+\nu ||\nabla w^{n+1}||^{2}+ \\
\left\{ \frac{\beta^2}{2k}\left(
||a^{n}w^{n+1}||^{2}-||a^{n-1}w^{n}||^{2}\right) +\frac{\beta^2}{2k}%
||a^{n}w^{n+1}-a^{n-1}w^{n}||^{2}+\int_{\Omega }\nu _{T}^{n}|\nabla
w^{n+1}|^{2}dx\right\} \\
=(f^{n+1},w^{n+1}).
\end{gather*}%
The first and last line arise from the terms in the usual backward Euler
discretization of the NSE. The second line (bracketed) is an equivalent form
of $MD^{n+1}.$

\begin{lemma}
For (Method 1) we have 
\begin{gather*}
MD^{n+1}=\frac{\beta^2}{2k}\left(
||a^{n}w^{n+1}||^{2}-||a^{n-1}w^{n}||^{2}\right) \\
+\frac{\beta^2}{2k}||a^{n}w^{n+1}-a^{n-1}w^{n}||^{2}+\int_{\Omega }\nu
_{T}^{n}|\nabla w^{n+1}|^{2}dx.
\end{gather*}
\end{lemma}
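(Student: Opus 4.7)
The plan is to take the definition
\[
MD^{n+1} = \int_{\Omega} \beta^{2} a^{n} \frac{a^{n}w^{n+1}-a^{n-1}w^{n}}{k}\cdot w^{n+1} \, dx + \int_{\Omega} \nu_{T}^{n} |\nabla w^{n+1}|^{2}\, dx
\]
and transform the first integral on the right into the two kinetic-energy-like quantities appearing in the lemma, leaving the eddy viscosity term untouched. The key observation is that $a^{n} = a(w^{n})$ is a scalar function, so it commutes with the dot product, allowing the rewrite
\[
\beta^{2}\, a^{n}\bigl(a^{n}w^{n+1}-a^{n-1}w^{n}\bigr)\cdot w^{n+1} = \beta^{2}\,\bigl(a^{n}w^{n+1}-a^{n-1}w^{n}\bigr)\cdot \bigl(a^{n}w^{n+1}\bigr).
\]
This is the same manipulation that appears in the proof of Theorem 4.1 when the time derivative term was reorganized.

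Next I would apply the polarization identity in the same form used in Theorem 4.1, namely
\[
(A-B)\cdot A \;=\; \tfrac{1}{2}|A|^{2} - \tfrac{1}{2}|B|^{2} + \tfrac{1}{2}|A-B|^{2},
\]
with the choices $A = a^{n}w^{n+1}$ and $B = a^{n-1}w^{n}$. Integrating this pointwise identity over $\Omega$ yields
\[
\int_{\Omega}\bigl(a^{n}w^{n+1}-a^{n-1}w^{n}\bigr)\cdot\bigl(a^{n}w^{n+1}\bigr)\, dx = \tfrac{1}{2}\|a^{n}w^{n+1}\|^{2} - \tfrac{1}{2}\|a^{n-1}w^{n}\|^{2} + \tfrac{1}{2}\|a^{n}w^{n+1}-a^{n-1}w^{n}\|^{2}.
\]
Multiplying through by $\beta^{2}/k$ and restoring the eddy viscosity integral gives precisely the claimed decomposition of $MD^{n+1}$.

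There is no genuine obstacle: the entire argument is a pointwise algebraic identity followed by integration. The only thing to be careful about is that $a^{n}$ is a scalar-valued function (so it passes through the inner product without any spatial derivatives being generated) and that the polarization identity is applied pointwise before integrating, rather than as an $L^{2}$ identity on two time levels of a fixed product. Since both of these are immediate, the proof amounts essentially to invoking the same polarization step already used in the proof of Theorem 4.1 and reading off the result.
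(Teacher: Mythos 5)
Your proof is correct and is exactly the argument the paper relies on: the paper does not write out a separate proof of this lemma but obtains the identity by the same scalar commutation of $a^{n}$ into the inner product followed by the polarization identity $(A-B)\cdot A=\tfrac{1}{2}|A|^{2}-\tfrac{1}{2}|B|^{2}+\tfrac{1}{2}|A-B|^{2}$ already used in the proof of Theorem 4.1. Nothing further is needed.
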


Dissipativity on time average holds for all three methods with the same
manipulations of their discrete energy equality. We record it here for
(Method 1).

\begin{proposition}[Time Averaged Dissipativity]
Suppose $\sup_{0<n<\infty }||f(t^{n})||<\infty $. Then, for $%
T_{N}=N\triangle t,$%
\begin{equation*}
\lim \inf_{T_{N}\rightarrow \infty }\frac{1}{T_{N}}\left( \triangle
t\sum_{n=0}^{N}MD^{n+1}\right) \geq 0.
\end{equation*}
\end{proposition}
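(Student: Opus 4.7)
The plan is to adapt the continuous-time dissipativity argument of Theorem 3.2 at the discrete level, using the explicit decomposition of $MD^{n+1}$ provided by Lemma 4.2. The crucial structural feature there is that $MD^{n+1}$ is the sum of three pieces: a telescopic discrete time-difference $(\beta^{2}/2k)(\|a^{n}w^{n+1}\|^{2} - \|a^{n-1}w^{n}\|^{2})$, a non-negative numerical diffusion $(\beta^{2}/2k)\|a^{n}w^{n+1} - a^{n-1}w^{n}\|^{2}$, and the non-negative eddy-viscosity dissipation $\int_{\Omega}\nu_{T}^{n}|\nabla w^{n+1}|^{2}\,dx$. I would multiply the Lemma 4.2 identity by $\Delta t$ and sum from $n=0$ to $N-1$.

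The first group telescopes cleanly to $\tfrac{\beta^{2}}{2}(\|a^{N-1}w^{N}\|^{2} - \|a^{-1}w^{0}\|^{2})$, while the remaining two groups are sums of non-negative terms. Dividing by $T_{N}=N\Delta t$ and discarding the non-negative contributions from below yields
$$\frac{1}{T_{N}}\Delta t\sum_{n=0}^{N-1} MD^{n+1} \;\geq\; \frac{\beta^{2}}{2T_{N}}\|a^{N-1}w^{N}\|^{2} - \frac{\beta^{2}}{2T_{N}}\|a^{-1}w^{0}\|^{2} \;\geq\; -\frac{\beta^{2}}{2T_{N}}\|a^{-1}w^{0}\|^{2}.$$
Taking $\liminf$ as $T_{N}\to\infty$ then gives the claim, so long as $\|a^{-1}w^{0}\|^{2}$ is finite.

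The finiteness of the initial-data quantity $\|a^{-1}w^{0}\|^{2} = \|a(w^{0})w^{0}\|^{2}$ follows directly from Lemma 3.1 under the natural regularity assumption $w^{0}\in\mathring{W}^{1,3}(\Omega)$, in which case it is controlled by $(C_{s}\delta)^{2}\nu^{-1}\|\nabla w^{0}\|_{L^{3}}^{3}$. The hypothesis $\sup_{n}\|f(t^{n})\|<\infty$, together with the unconditional stability from Theorem 4.1, guarantees that the iterates $w^{n}$ exist and have uniformly bounded $\tfrac{1}{2}\|w^{n}\|^{2}+\tfrac{\beta^{2}}{2}\|a^{n-1}w^{n}\|^{2}$, which is what keeps the telescoped boundary term $\|a^{N-1}w^{N}\|^{2}$ from producing a negative contribution via some hidden growth.

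Honestly, I do not expect a real obstacle here: the liminf inequality is forced by the sign structure built into Method 1 so that Lemma 4.2 holds with the right signs. The substantive design work was done upstream, in arranging the lagging $a^{n}\,(a^{n}w^{n+1}-a^{n-1}w^{n})/k$ so that its inner product with $w^{n+1}$ produces a telescoping piece plus a non-negative square via the polarization identity already used in Theorem 4.1. The same three-line argument, applied to the analogs of Lemma 4.2 for Methods 2 and 3, will give the corresponding statements for those schemes as well.
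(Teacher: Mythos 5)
Your argument is correct, and it is precisely the ``discrete analog of the continuous case'' that the paper invokes but omits: Lemma 4.2 splits $MD^{n+1}$ into a telescoping difference plus two non-negative terms, the telescoped sum is bounded below by $-\tfrac{\beta^{2}}{2}\|a^{-1}w^{0}\|^{2}$, and dividing by $T_{N}$ kills this initial-data term in the limit. You correctly observe that only the finiteness of $\|a(w^{0})w^{0}\|$ (via Lemma 3.1) is genuinely needed for the stated liminf inequality; the hypothesis on $f$ and the stability bound of Theorem 4.1 would only enter if one also wanted the finiteness of the limsup, as in the continuous Theorem 3.2.
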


\begin{proof}
The proof is a discrete analog of the continuous case and will be omitted.
\end{proof}

\subsection{Modular Correction of EV Models}

Given a code that computes an approximation to the EV model%
\begin{equation}
w_{t}+w\cdot \nabla w-\nabla \cdot \left( \left[ \nu +\nu _{T}(w)\right]
\nabla w\right) +\nabla q=f(x,t)\text{.}  \label{eq:EV model}
\end{equation}%
Algorithm 4.2 presents a minimally intrusive, modular, postprocessor to
solve:%
\begin{equation}
w_{t}+\beta ^{2}a(w)\frac{\partial }{\partial t}\left( a(w)w\right) +w\cdot
\nabla w-\nabla \cdot \left( \left[ \nu +\nu _{T}(w)\right] \nabla w\right)
+\nabla q=f(x,t)\text{.}  \label{eq:ExtEVmdel}
\end{equation}%
Precise stability analysis requires a specific choice of the algorithm used
to solve (\ref{eq:EV model}). For this we select the simple, linearly
implicit, backward Euler method.

\textbf{Derivation and Consistency Error.} To derive the modular
postprocessor, rewrite (\ref{eq:EV model}) and (\ref{eq:ExtEVmdel}) as,
respectively,%
\begin{equation*}
y^{\prime }(t)=f(t,y)\text{ \ and \ }y^{\prime }(t)+\beta ^{2}a(y)\frac{d}{dt%
}(a(y)y)=f(t,y).
\end{equation*}%
The postprocessing given in Step 2 below suffices.

\begin{algorithm}[Method 2]
Given \ $y^{n},y^{n-1},$

\textbf{Step 1: }Calculate $y_{temp}^{n+1}$ by: \  $\frac{%
y_{temp}^{n+1}-y^{n}}{k}=f(t_{n+1},y_{temp}^{n+1}),$

\textbf{Step 2 }: Postprocess to obtain $y^{n+1}$\ from $y_{temp}^{n+1}$\ by:

$\left[ 1+\beta ^{2}a(y^{n})^{2}\right] y^{n+1}=y_{temp}^{n+1}+\beta
^{2}a(y^{n})a(y^{n-1})y^{n}$ .
\end{algorithm}

Eliminating $y_{temp}^{n+1}$ from Step 2 shows that $y^{n+1}$\ satisfies
(with $a^{n}=a(y^{n})$)%
\begin{equation*}
\frac{y^{n+1}-y^{n}}{k}+\beta ^{2}a^{n}\frac{a^{n}y^{n+1}-a^{n-1}y^{n}}{k}%
=f(t_{n+1},y_{temp}^{n+1}).
\end{equation*}%
Although this is close to Method 1, $y_{temp}^{n+1}$ not $y^{n+1}$ occurs in
the RHS. The LHS is clearly a first order approximation to $y^{\prime
}(t)+\beta a(y)(a(y)y)^{\prime }$. The RHS, $f(t_{n+1},y_{temp}^{n+1})$, is
a first order approximation to $f(t,y)$ provided $%
y_{temp}^{n+1}-y^{n+1}=O(k) $. Rearranging Step 2 gives%
\begin{eqnarray*}
y_{temp}^{n+1}-y^{n+1} &=&k\left\{ \beta ^{2}a(y^{n})\frac{%
a(y^{n})y^{n+1}-a(y^{n-1})y^{n}}{k}\right\} \\
&=&k\left( \beta ^{2}a(y)\frac{d}{dt}(a(y)y)\right) +O(k^{2})=O(k).
\end{eqnarray*}%
Thus, Algorithm 4.4 is first order accurate approximation of $y^{\prime
}(t)+\beta ^{2}a(y)(a(y)y)^{\prime }$ = $f(t,y)$.

\textbf{Unconditional Stability of the Modular Algorithm.} The utility of
Algorithm 4.4 thus depends on its stability. This is now analyzed for its
application to the corrected EV model. Algorithm 4.4 for (\ref{eq:ExtEVmdel}%
) reads as follows.

\begin{algorithm}
For $n\geq 0,$ given $w^{n}$

\textbf{Step 1:} Find $w_{temp}^{n+1}$\ satisfying%
\begin{gather*}
\frac{w_{temp}^{n+1}-w^{n}}{k} +w^{n}\cdot \nabla w_{temp}^{n+1} \\
-\nabla \cdot \left( \left[ \nu +\nu _{T}^{n}\right] \nabla
w_{temp}^{n+1}\right) +\nabla q_{temp}^{n+1}=f^{n+1}(x)\text{ in }\Omega , \\
\nabla \cdot w_{temp}^{n+1}=0\text{ in }\Omega ,\text{ and }w_{temp}^{n+1}=0%
\text{ on }\partial \Omega \text{ .}
\end{gather*}

\textbf{Step 2:} Given $w_{temp}^{n+1},q_{temp}^{n+1}$\ find $%
w^{n+1},q^{n+1} $ satisfying%
\begin{gather}
\lbrack 1+\beta ^{2}\left( a^{n}\right) ^{2}]w^{n+1}+\nabla
q^{n+1}=w_{temp}^{n+1}+\beta ^{2}a^{n}a^{n-1}w^{n}\text{ in }\Omega \\
\nabla \cdot w^{n+1}=0\text{ in }\Omega ,\text{ and }w^{n+1}=0\text{ on }%
\partial \Omega \text{ .}  \notag
\end{gather}
\end{algorithm}

We prove Algorithm 4.5 is unconditionally stable.

\begin{theorem}
Algorithm 4.5 is unconditionally, nonlinearly energy stable. For any $N\geq
1 $%
\begin{gather*}
\left( \frac{1}{2}||w^{N}||^{2}+\frac{1}{2}\beta
^{2}||a^{N-1}w^{N}||^{2}\right) \\
+\sum_{n=0}^{N-1}\frac{1}{2}\left(
||w^{n+1}-w_{temp}^{n+1}||^{2}+||w_{temp}^{n+1}-w^{n}||^{2}\right) \\
+k\sum_{n=0}^{N-1}\left( \int_{\Omega }\left[ \nu +\nu _{T}^{n}\right]
|\nabla w_{temp}^{n+1}|^{2}dx\right) \\
=\left( \frac{1}{2}||w^{0}||^{2}+\frac{1}{2}\beta
^{2}||a^{-1}w^{0}||^{2}\right) +k\sum_{n=0}^{N-1}(f^{n+1},w_{temp}^{n+1}).
\end{gather*}
\end{theorem}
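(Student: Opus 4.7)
My plan is to derive a per-step energy identity for each of the two substeps, add them, and telescope the sum. The two substeps will be tested against different functions: Step 1 against $w_{temp}^{n+1}$ (the variable it solves for) and Step 2 against $w^{n+1}$. The reason for this pairing is that each test function eliminates the correct structural term: the skew-symmetric convective term in Step 1 vanishes because $(w^{n}\cdot\nabla w_{temp}^{n+1},w_{temp}^{n+1})=0$, and the pressure term $(\nabla q^{n+1},w^{n+1})$ in Step 2 vanishes by $\nabla\cdot w^{n+1}=0$.

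First, I take the $L^{2}$ inner product of Step 1 with $w_{temp}^{n+1}$, multiply by $k$, and use the polarization identity on $(w^{n},w_{temp}^{n+1})$ to obtain
\begin{equation*}
\tfrac{1}{2}\|w_{temp}^{n+1}\|^{2}-\tfrac{1}{2}\|w^{n}\|^{2}+\tfrac{1}{2}\|w_{temp}^{n+1}-w^{n}\|^{2}+k\!\int_{\Omega}[\nu+\nu_{T}^{n}]|\nabla w_{temp}^{n+1}|^{2}dx=k(f^{n+1},w_{temp}^{n+1}).
\end{equation*}
Next, I take the $L^{2}$ inner product of the Step 2 identity with $w^{n+1}$; since $a^{n}$ is scalar, the right side becomes $(w_{temp}^{n+1},w^{n+1})+\beta^{2}(a^{n-1}w^{n},a^{n}w^{n+1})$, while the left side yields $\|w^{n+1}\|^{2}+\beta^{2}\|a^{n}w^{n+1}\|^{2}$. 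Applying the polarization identity to both mixed inner products on the right will produce
\begin{equation*}
\tfrac{1}{2}\|w^{n+1}\|^{2}-\tfrac{1}{2}\|w_{temp}^{n+1}\|^{2}+\tfrac{1}{2}\|w^{n+1}-w_{temp}^{n+1}\|^{2}+\tfrac{\beta^{2}}{2}\|a^{n}w^{n+1}\|^{2}-\tfrac{\beta^{2}}{2}\|a^{n-1}w^{n}\|^{2}+\tfrac{\beta^{2}}{2}\|a^{n}w^{n+1}-a^{n-1}w^{n}\|^{2}=0.
\end{equation*}

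Adding the two identities causes the $\tfrac{1}{2}\|w_{temp}^{n+1}\|^{2}$ terms to cancel, leaving a single-step identity whose kinetic-energy piece telescopes in $n$ through $(\tfrac{1}{2}\|w^{n+1}\|^{2}+\tfrac{\beta^{2}}{2}\|a^{n}w^{n+1}\|^{2})-(\tfrac{1}{2}\|w^{n}\|^{2}+\tfrac{\beta^{2}}{2}\|a^{n-1}w^{n}\|^{2})$. Summing from $n=0$ to $N-1$ produces the claimed equality; the non-negative extra polarization term $\tfrac{\beta^{2}}{2}\|a^{n}w^{n+1}-a^{n-1}w^{n}\|^{2}$ can be absorbed into the left-hand side (strengthening it to an equality $\le$ upper bound, or simply grouped with numerical diffusion).

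The main conceptual obstacle is the splitting across two substeps: unlike Theorem 4.1, where one single equation is tested with a single function, here the test-function/solution mismatch on the right of Step 2 (namely $w_{temp}^{n+1}$ appears where one would like $w^{n+1}$) means that the ``numerical diffusion'' must absorb the polarization defect $\tfrac{1}{2}\|w^{n+1}-w_{temp}^{n+1}\|^{2}$. The secondary bookkeeping obstacle is checking that the $\beta^{2}$ cross-term $(a^{n-1}w^{n},a^{n}w^{n+1})$, produced with $a^{n}$ viewed as a pointwise multiplier, telescopes in the index $n$ exactly as it does in Method 1; this uses the initialization convention $a^{-1}=a^{0}=a(w^{0})$ fixed in Section 4 to make the $n=0$ boundary term match $\tfrac{\beta^{2}}{2}\|a^{-1}w^{0}\|^{2}$ on the right-hand side.
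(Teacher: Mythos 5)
Your proposal is correct and follows essentially the same route as the paper: test Step 1 with $w_{temp}^{n+1}$, test Step 2 with $w^{n+1}$, polarize the mixed terms, cancel $\tfrac{1}{2}\|w_{temp}^{n+1}\|^{2}$ between the two identities, and telescope. You even correctly note the leftover non-negative term $\tfrac{\beta^{2}}{2}\|a^{n}w^{n+1}-a^{n-1}w^{n}\|^{2}$, which appears in the paper's per-step identity but is silently dropped from the stated equality; your remark that it should either be retained on the left or the statement read as an inequality is the right fix.
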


\begin{proof}
Take the $L^{2}$ inner product of Step 1 with $w_{temp}^{n+1}$\ and follow
the proof of Theorem 1. This gives%
\begin{gather}
\frac{1}{2}||w_{temp}^{n+1}||^{2}-\frac{1}{2}||w^{n}||^{2}+\frac{1}{2}%
||w_{temp}^{n+1}-w^{n}||^{2}  \tag{Step 1 Energy} \\
+k\int_{\Omega }\left[ \nu +\nu _{T}^{n}\right] |\nabla
w_{temp}^{n+1}|^{2}dx=k(f^{n+1},w_{temp}^{n+1}).  \notag
\end{gather}%
Consider Step 2. Taking the $L^{2}$ inner product with $w^{n+1}$ gives%
\begin{equation*}
||w^{n+1}||^{2}+\beta
^{2}||a^{n}w^{n+1}||^{2}=(w_{temp}^{n+1},w^{n+1})+\beta
^{2}(a^{n-1}w^{n},a^{n}w^{n+1}).
\end{equation*}%
The two terms on the RHS are treated with the polarization identity%
\begin{eqnarray*}
(w_{temp}^{n+1},w^{n+1}) &=&\frac{1}{2}||w_{temp}^{n+1}||^{2}+\frac{1}{2}%
||w^{n+1}||^{2}-\frac{1}{2}||w_{temp}^{n+1}-w^{n+1}||^{2}, \\
(a^{n-1}w^{n},a^{n}w^{n+1}) &=&\frac{1}{2}||a^{n}w^{n+1}||^{2}+\frac{1}{2}%
||a^{n-1}w^{n}||^{2}-\frac{1}{2}||a^{n}w^{n+1}-a^{n-1}w^{n}||,
\end{eqnarray*}%
giving%
\begin{gather*}
\frac{1}{2}||w^{n+1}||^{2}+\frac{1}{2}\beta ^{2}||a^{n}w^{n+1}||^{2}-\frac{%
\beta ^{2}}{2}||a^{n-1}w^{n}||^{2}+\frac{1}{2}||w_{temp}^{n+1}-w^{n+1}||^{2}
\\
+\frac{\beta ^{2}}{2}||a^{n}w^{n+1}-a^{n-1}w^{n}||=\frac{1}{2}%
||w_{temp}^{n+1}||^{2}.
\end{gather*}%
Insert the LHS for $\frac{1}{2}||w_{temp}^{n+1}||^{2}$ in (Step 1 Energy).
This gives%
\begin{gather*}
\frac{1}{2}||w^{n+1}||^{2}-\frac{1}{2}||w^{n}||^{2}+\frac{1}{2}\beta
^{2}||a^{n}w^{n+1}||^{2}-\frac{1}{2}\beta ^{2}||a^{n-1}w^{n}||^{2} \\
+\frac{1}{2}||w_{temp}^{n+1}-w^{n}||^{2}+\frac{1}{2}%
||w_{temp}^{n+1}-w^{n+1}||^{2}+\frac{\beta ^{2}}{2}%
||a^{n}w^{n+1}-a^{n-1}w^{n}||^{2} \\
+k\int_{\Omega }\left[ \nu +\nu _{T}^{n}\right] |\nabla
w_{temp}^{n+1}|^{2}dx=k\int_{\Omega }f^{n+1}\cdot w_{temp}^{n+1}dx.
\end{gather*}%
The result now follows by summing over $n$.
\end{proof}

\textbf{A Second Order Time Discretization. }We present a second order
method comprised of an IMEX combination of BDF2 and AB2 adapted to the new
kinetic energy term. It shortens the notation considerably to denote the
linear extrapolation of a variable $\phi $ to $t^{n+1}$ by $\phi ^{\ast
n+1}: $%
\begin{equation*}
\phi ^{\ast n+1}:=2\phi ^{n}-\phi ^{n-1},\text{ for }\phi =w,a,\nu _{T}.
\end{equation*}
The method is: given $w^{0}$, $w^{1}$, $w^{2}$ and $w^{3}$ (found by another
method) find $w^{n+1}$ \ for $n\geq 3$\ satisfying%
\begin{gather}
\frac{3w^{n+1}-4w^{n}+w^{n-1}}{2k}  \tag{Method 3} \\
+\beta ^{2}a^{\ast n+1}\frac{3a^{\ast n+1}w^{n+1}-4a^{\ast n}w^{n}+a^{\ast
n-1}w^{n-1}}{2k}  \notag \\
+w^{\ast n+1}\cdot \nabla w^{n+1}-\nabla \cdot \left( \left[ \nu +\nu
_{T}^{\ast n+1}\right] \nabla w^{n+1}\right) +\nabla q^{n+1}=f^{n+1}(x)\quad 
\text{in }\Omega ,  \notag \\
\nabla \cdot w^{n+1}=0\text{ in }\Omega ,\text{ and }w^{n+1}=0\text{ on }%
\partial \Omega \text{ .}  \notag
\end{gather}

\begin{theorem}
The method (Method 3) is unconditionally, nonlinearly, long time stable. For
any $N\geq 3$%
\begin{gather*}
\frac{1}{4}\big(\Vert w^{N}\Vert ^{2}+\Vert w^{\ast N+1}\Vert ^{2}\big) \\
+\frac{1}{4}\Big(\beta ^{2}\Vert a^{\ast N}w^{N}\Vert ^{2}+\beta ^{2}\Vert
2a^{\ast N}w^{N}-a^{\ast N-1}w^{N-1}\Vert ^{2}\Big) \\
+\sum_{n=3}^{N-1}\Big \{\frac{1}{4}\beta ^{2}\Vert a^{\ast
n+1}w^{n+1}-2a^{\ast n}w^{n}+a^{\ast n-1}w^{n-1}\Vert ^{2} \\
+\frac{1}{4}\Vert w^{n+1}-2w^{n}+w^{n-1}\Vert ^{2}+k\int_{\Omega }\left[ \nu
+\nu _{T}^{\ast n+1}\right] |\nabla w^{n+1}|^{2}dx\Big \} \\
=\sum_{n=3}^{N-1}k(f^{n+1},w^{n+1})+\frac{1}{4}\big(\Vert w^{3}\Vert
^{2}+\Vert 2w^{3}-w^{2}\Vert ^{2}\big) \\
+\frac{1}{4}\Big(\beta ^{2}\Vert a^{\ast 3}w^{3}\Vert ^{2}+\beta ^{2}\Vert
2a^{\ast 3}w^{3}-a^{\ast 2}w^{2}\Vert ^{2}\Big).
\end{gather*}
\end{theorem}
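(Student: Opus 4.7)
The plan is to derive the stated identity as an energy equality, by taking the $L^2$ inner product of (Method 3) with $w^{n+1}$, multiplying through by $2k$, and applying Dahlquist's BDF2 G-stability identity twice. Since $w^{*n+1} = 2w^n - w^{n-1}$ is a linear combination of discretely divergence-free functions, it is itself divergence-free, so the convective term $(w^{*n+1}\cdot\nabla w^{n+1},w^{n+1})$ vanishes and the pressure term integrates away against $w^{n+1}$. The viscous contribution is then $2k\int_\Omega[\nu+\nu_T^{*n+1}]|\nabla w^{n+1}|^2\,dx$, and the forcing contributes $2k(f^{n+1},w^{n+1})$. What remains to handle are the two discrete time-derivative terms, for which I would invoke the G-stability identity
\[
2(3x^{n+1}-4x^n+x^{n-1},x^{n+1}) = \|x^{n+1}\|^2+\|2x^{n+1}-x^n\|^2 - \|x^n\|^2-\|2x^n-x^{n-1}\|^2 + \|x^{n+1}-2x^n+x^{n-1}\|^2.
\]

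The key algebraic observation---which I expect to be the single point that must be checked most carefully---is that the scalar factor $a^{*n+1}$ sitting outside the BDF2 stencil in $\beta^2 a^{*n+1}\frac{3a^{*n+1}w^{n+1}-4a^{*n}w^n+a^{*n-1}w^{n-1}}{2k}$ pairs precisely with the test function $w^{n+1}$ to form $a^{*n+1}w^{n+1}$. Thus the corresponding term of the tested equation equals
\[
\tfrac{\beta^2}{2k}\bigl(3(a^{*n+1}w^{n+1})-4(a^{*n}w^n)+(a^{*n-1}w^{n-1}),\,a^{*n+1}w^{n+1}\bigr),
\]
which is exactly the Dahlquist identity above applied to the sequence $\{a^{*m}w^m\}_m$. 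Had the outer factor been anything but $a^{*n+1}$, the clean three-term BDF2 structure would be broken and G-stability would not apply; this is the design feature of (Method 3) that makes the argument go through.

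Applying the identity to both $\{w^m\}$ and $\{a^{*m}w^m\}$, then dividing by $2$, produces a per-step equality whose time-difference portion telescopes cleanly when summed from $n=3$ to $N-1$, while the two squared-triple-difference remainders $\tfrac14\|w^{n+1}-2w^n+w^{n-1}\|^2$ and $\tfrac{\beta^2}{4}\|a^{*n+1}w^{n+1}-2a^{*n}w^n+a^{*n-1}w^{n-1}\|^2$ accumulate inside the sum as the stated numerical dissipation. Finally, using $w^{*N+1} = 2w^N - w^{N-1}$ rewrites the terminal telescoping contribution $\tfrac14\|2w^N-w^{N-1}\|^2$ in the form $\tfrac14\|w^{*N+1}\|^2$ stated in the theorem, with the $a^{*}$-weighted terminal contribution $\tfrac{\beta^2}{4}\|2a^{*N}w^N-a^{*N-1}w^{N-1}\|^2$ arising in exactly the same way. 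The initial contributions from the $n=3$ end of the telescoping move to the right-hand side and match the stated starting-data terms. Because every term on the left of the resulting identity is non-negative, unconditional long-time stability is immediate; the remaining bookkeeping is routine.
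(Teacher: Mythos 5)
Your proposal is correct and follows essentially the same route as the paper: take the $L^2$ inner product with $w^{n+1}$, apply the BDF2 G-stability (Dahlquist) identity once to $\{w^m\}$ and once to $\{a^{\ast m}w^m\}$ — exploiting exactly the pairing of the outer factor $a^{\ast n+1}$ with the test function that you single out — and telescope from $n=3$ to $N-1$. The bookkeeping you describe (terminal terms becoming $\Vert w^{\ast N+1}\Vert^2$ and $\Vert 2a^{\ast N}w^{N}-a^{\ast N-1}w^{N-1}\Vert^2$, initial terms moving to the right-hand side) matches the paper's displayed per-step equality.
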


\begin{proof}
Take the $L^{2}$ inner product of (Method 3) with $w^{n+1}$ and multiply
through by $k$. This gives 
\begin{gather*}
\frac{1}{4}\big(\Vert w^{n+1}\Vert ^{2}+\Vert w^{\ast n+2}\Vert ^{2}\big)-%
\frac{1}{4}\left( \Vert w^{n}\Vert ^{2}+\Vert w^{\ast n+1}\Vert ^{2}\right)
\\
+\frac{1}{4}\Big(\beta ^{2}\Vert a^{\ast n+1}w^{n+1}\Vert ^{2}+\beta
^{2}\Vert 2a^{\ast n+1}w^{n+1}-a^{\ast n}w^{n}\Vert ^{2}\Big) \\
-\frac{1}{4}\Big(\beta ^{2}\Vert a^{\ast n}w^{n}\Vert ^{2}+\beta ^{2}\Vert
2a^{\ast n}w^{n}-a^{\ast n-1}w^{n-1}\Vert ^{2}\Big) \\
+\frac{1}{4}\beta ^{2}\Vert a^{\ast n+1}w^{n+1}-2a^{\ast n}w^{n}+a^{\ast
n-1}w^{n-1}\Vert ^{2} \\
+\frac{1}{4}\Vert w^{n+1}-2w^{n}+w^{n-1}\Vert ^{2}+k\int_{\Omega }\left[ \nu
+\nu _{T}^{\ast n+1}\right] |\nabla w^{n+1}|^{2}dx=k(f^{n+1},w^{n+1}).
\end{gather*}%
Summing up above equality from $n=3$ to $n=N-1$ completes the proof.
\end{proof}

\section{The Re-scaling Parameter $\protect\beta $}

If $\nu _{T}(w)$ is well calibrated, then with $a(w)$ $=\sqrt{\nu
_{T}(w)/\nu }$, we begin with the fluctuation model%
\begin{equation}
action(\nabla u^{\prime })\simeq a(\left\langle u\right\rangle )\nabla
\left\langle u\right\rangle .  \label{eq:FMgradu}
\end{equation}%
A fluctuation model relating $action(u^{\prime })$ to $\left\langle
u\right\rangle $ is also needed. It is plausible but incorrect to begin with 
$action(u^{\prime })\simeq a(\left\langle u\right\rangle )\left\langle
u\right\rangle $. For reasons developed next, this relation must be rescaled
(by a factor $\beta <<1$) yielding%
\begin{equation}
\text{\ }action(u^{\prime })\simeq \beta \cdot a(\left\langle u\right\rangle
)\left\langle u\right\rangle .  \label{eq:FMu}
\end{equation}%
\ 

\begin{definition}
Let $u$ denote an ensemble of realizations of the NSE with perturbed
initial data. The associated turbulent intensities of $u$ and $\nabla u$
are, respectively, 
\begin{equation*}
I(u):=\frac{\left\langle ||u^{\prime }||^{2}\right\rangle }{||\left\langle
u\right\rangle ||^{2}}\text{ \ and \ }I(\nabla u):=\frac{\left\langle
||\nabla u^{\prime }||^{2}\right\rangle }{||\nabla \left\langle
u\right\rangle ||^{2}}.
\end{equation*}
\end{definition}

The development of analytic insight into the scaling parameter $\beta $ is
based on phenomenology \textit{associating statistical means and
fluctuations respectively with large and small spacial scales}. Recall that,
e.g., \cite{BatchelorHomoTurb}, \cite{Frisch}, \cite{MY07}, \cite{Pope}, for
fully developed turbulence, kinetic energy is concentrated in the large
scales ($\left\langle ||u^{\prime }||^{2}\right\rangle <<||\left\langle
u\right\rangle ||^{2}$) while energy dissipation is concentrated in the
small scales ($\left\langle ||\nabla u^{\prime }||^{2}\right\rangle
>>||\nabla \left\langle u\right\rangle ||^{2}$), \cite{DP11}, \cite{Frisch}, 
\cite{Pope}. On the scales where one is significant, the other is
negligible. This implies that for fully developed turbulence%
\begin{equation*}
I(u)<<1<<I(\nabla u).
\end{equation*}%
Beginning with the fluctuation models (\ref{eq:FMgradu}), (\ref{eq:FMu}) we
thus have%
\begin{equation*}
\beta ^{2}\frac{||a(\left\langle u\right\rangle )\left\langle u\right\rangle
||^{2}}{||\left\langle u\right\rangle ||^{2}}\simeq \frac{\left\langle
||u^{\prime }||^{2}\right\rangle }{||\left\langle u\right\rangle ||^{2}}<<1<<%
\frac{\left\langle ||\nabla u^{\prime }||^{2}\right\rangle }{||\nabla
\left\langle u\right\rangle ||^{2}}\simeq \frac{||a(\left\langle
u\right\rangle )\nabla \left\langle u\right\rangle ||^{2}}{||\nabla
\left\langle u\right\rangle ||^{2}}.
\end{equation*}%
The quantities%
\begin{equation*}
\frac{||a(\left\langle u\right\rangle )\left\langle u\right\rangle ||^{2}}{%
||\left\langle u\right\rangle ||^{2}}\text{ \ \& }\frac{||a(\left\langle
u\right\rangle )\nabla \left\langle u\right\rangle ||^{2}}{||\nabla
\left\langle u\right\rangle ||^{2}}
\end{equation*}%
both represent (squares of) weighted averages of $a(\left\langle
u\right\rangle )$. If (as expected) these are of comparable magnitude, $%
\beta <<1$ (as expected) and we obtain the estimate%
\begin{equation}
\beta ^{2}\simeq \frac{I(u)}{I(\nabla u)}<<1.  \label{eq:BetaSmall}
\end{equation}

\textbf{Predictions of Phenomenology.} In $3d$, for fully developed,
homogeneous, isotropic turbulence an estimate of this quotient (and thus $%
\beta $) can be calculated from the K41 theory and its predicted,
time-averaged, energy density distribution $E(k)\simeq \alpha \varepsilon
^{2/3}k^{-5/3}$, \cite{Pope}. Associate means with a length scale (typically
the given mesh-width) $\delta $. The well resolved scales and unresolved
scales are then, respectively, $\pi /L<k<\pi /\delta $ and $\pi /\delta
<k<\pi /\eta .$ $\eta =\func{Re}^{-3/4}L$\ is the Kolmogorov micro-scale. We
then calculate%
\begin{equation*}
I(u)\simeq \frac{\int_{\pi /\delta }^{\pi /\eta }E(k)dk}{\int_{\pi /L}^{\pi
/\delta }E(k)dk}=\frac{\left( \frac{\pi }{\eta }\right) ^{-2/3}-\left( \frac{%
\pi }{\delta }\right) ^{-2/3}}{\left( \frac{\pi }{\delta }\right)
^{-2/3}-\left( \frac{\pi }{L}\right) ^{-2/3}}.
\end{equation*}%
In the limit Re $\rightarrow \infty ,\eta \rightarrow 0$ and $\delta <<L$%
\begin{equation*}
I(u)\rightarrow \frac{\left( \frac{\delta }{L}\right) ^{2/3}}{1-\left( \frac{%
\delta }{L}\right) ^{2/3}}=\left( \frac{\delta }{L}\right) ^{2/3}+H.O.T.s.
\end{equation*}%
Similarly, we calculate for $\delta >>\eta $%
\begin{equation*}
I(\nabla u)\simeq \frac{\int_{\pi /\delta }^{\pi /\eta }k^{2}E(k)dk}{%
\int_{\pi /L}^{\pi /\delta }k^{2}E(k)dk}=\left( \frac{\delta }{\eta }\right)
^{4/3}+H.O.T.s.
\end{equation*}%
In particular, dropping higher order terms, $I(u)\simeq \left( \delta
/L\right) ^{2/3}<<$ $1<<$ $\ I(\nabla u)\simeq \left( \delta /\eta \right)
^{4/3}.$ Thus, to leading order, in $3d$%
\begin{equation*}
\beta \simeq \sqrt{\frac{I(u)}{I(\nabla u)}}\simeq Re^{-1/2}\left( \frac{%
\delta }{L}\right) ^{-2/3}.
\end{equation*}

\begin{remark}[The 2d Case]
Phenomenology of two dimensional turbulence is more complicated. The
simplest case for forced turbulence is when the model incorporates some
extra mechanism to extract energy from the largest scales, energy is
injected in an intermediate scale and $\delta /L$ is smaller than the
injection scale but much larger than the micro-scale. Adapting the above
spectral calculation to $2d$ gives%
\begin{equation*}
\beta \simeq \frac{\delta }{L}(\ln \frac{\delta }{\eta })^{-1/2}.
\end{equation*}
\end{remark}

\textbf{Mesh Dependence.} Apparently one option is to\ calculate the
turbulent intensities on a given mesh and use these to find the re-scaling
parameter $\beta $. Unfortunately, the result is severely limited by the
chosen mesh as we now develop. Suppose spacial discretization is performed
by a standard, conforming finite element method based on a mesh of elements $%
e$ with element diameter (the local mesh width) denoted \ $h_{e}$. For
meshes satisfying an angle condition eliminating nearly degenerate elements
and piecewise polynomial finite element velocities $u_{h}$, the inverse
property 
\begin{equation}
||\nabla u_{h}||_{L^{2}(e)}\leq C_{INV}h_{e}^{-1}||u_{h}||_{L^{2}(e)}
\label{eq:InverseEst}
\end{equation}%
holds, where the constant depends only on local polynomial degree and mesh
geometry (element angles). This implies that%
\begin{equation*}
||\nabla u_{h}||\leq C_{INV}h^{-1}||u_{h}||\text{ \ where \ }h:=min_{e}h_{e}.
\end{equation*}

Let $u_{h}$\ denote an ensemble of discrete velocities computed on the same,
fixed mesh as described above. \ Define the characteristic length-scale $L$
of the ensemble mean velocity \ (expected but not guaranteed to be large) by%
\begin{equation*}
L^{-1}:=\frac{||\nabla \left\langle u\right\rangle ||}{||\left\langle
u\right\rangle ||}.
\end{equation*}

\begin{proposition}
Suppose (\ref{eq:InverseEst}) holds. Then 
\begin{equation*}
\beta _{h}:=\sqrt{\frac{I(u_{h})}{I(\nabla u_{h})}}
\end{equation*}%
satisfies%
\begin{equation*}
C_{INV}\frac{h}{L}\leq \beta _{h}\leq C_{PF}\frac{1}{L}.
\end{equation*}
\end{proposition}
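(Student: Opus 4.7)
The plan is to factor the definition of $\beta_h$ into a ``geometric'' piece depending only on $L$ and a ``fluctuation'' ratio, and then to bound the fluctuation ratio from above by Poincar\'e--Friedrichs and from below by the inverse estimate (\ref{eq:InverseEst}).

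First I would rewrite
\begin{equation*}
\beta_{h}^{2} \;=\; \frac{I(u_{h})}{I(\nabla u_{h})}
\;=\; \frac{\langle \|u_{h}'\|^{2}\rangle}{\langle \|\nabla u_{h}'\|^{2}\rangle}
      \cdot \frac{\|\nabla \langle u_{h}\rangle\|^{2}}{\|\langle u_{h}\rangle\|^{2}}
\;=\; \frac{1}{L^{2}}\,\frac{\langle \|u_{h}'\|^{2}\rangle}{\langle \|\nabla u_{h}'\|^{2}\rangle},
\end{equation*}
using the definition of $L$. Thus the proposition reduces to the two bounds
\begin{equation*}
\frac{h^{2}}{C_{INV}^{2}}
\;\leq\; \frac{\langle \|u_{h}'\|^{2}\rangle}{\langle \|\nabla u_{h}'\|^{2}\rangle}
\;\leq\; C_{PF}^{2},
\end{equation*}
after which taking square roots gives the claimed inequalities (up to the convention of which constant is absorbed where).

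For the upper bound, each realisation $u_{h}(\cdot,\cdot;\omega_{j})$ and the ensemble mean $\langle u_{h}\rangle$ satisfy the no-slip boundary condition, so the fluctuation $u_{h}' = u_{h} - \langle u_{h}\rangle \in H^{1}_{0}(\Omega)$. The Poincar\'e--Friedrichs inequality applied pointwise in $\omega_{j}$ then gives $\|u_{h}'\|^{2} \leq C_{PF}^{2}\,\|\nabla u_{h}'\|^{2}$, which is preserved by ensemble averaging to yield $\langle \|u_{h}'\|^{2}\rangle \leq C_{PF}^{2}\,\langle \|\nabla u_{h}'\|^{2}\rangle$.

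For the lower bound, $u_{h}'$ is again a finite element function (being the difference of finite element velocities in the same space on the same mesh), so the inverse property (\ref{eq:InverseEst}) applies element by element, and summing over elements gives $\|\nabla u_{h}'\| \leq C_{INV} h^{-1} \|u_{h}'\|$ on each realisation. Squaring, averaging over the ensemble, and rearranging yields the desired lower bound on the fluctuation ratio. Combining both bounds with the factorisation above completes the proof.

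The step I would expect to be the only non-routine one is the bookkeeping check that the inverse and Poincar\'e constants really survive ensemble averaging and the division by $\|\langle u_{h}\rangle\|^{2}$; once the identity for $\beta_{h}^{2}$ is written out, both directions are a line each. There is no genuine analytic obstacle here, the proposition is essentially an a priori sharpness statement showing why $\beta_{h}$ computed from a fixed mesh is constrained to lie in the window $[h/L,\,1/L]$ (up to constants) and thus cannot, by itself, capture the physical scaling $\beta \sim Re^{-1/2}(\delta/L)^{-2/3}$ predicted phenomenologically.
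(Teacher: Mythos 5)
Your proposal is correct and follows essentially the same route as the paper: factor $\beta_h^2$ into $L^{-2}$ times the fluctuation ratio $\langle\|u_h'\|^2\rangle/\langle\|\nabla u_h'\|^2\rangle$, then bound that ratio above by Poincar\'e--Friedrichs and below by the inverse estimate. Your parenthetical about constant conventions is apt --- the paper's own proof likewise yields $C_{INV}^{-1}h/L$ on the left rather than the $C_{INV}\,h/L$ written in the statement.
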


\begin{proof}
We have, by rearranging,%
\begin{equation*}
\frac{I(u_{h})}{I(\nabla u_{h})}=\frac{\left\langle ||u_{h}^{\prime
}||^{2}\right\rangle }{\left\langle ||\nabla u_{h}^{^{\prime
}}||^{2}\right\rangle }\frac{\left\langle ||\nabla \left\langle
u_{h}\right\rangle ||^{2}\right\rangle }{||\left\langle u_{h}\right\rangle
||^{2}}=\frac{\left\langle ||u_{h}^{\prime }||^{2}\right\rangle }{%
\left\langle ||\nabla u_{h}^{^{\prime }}||^{2}\right\rangle }L^{-2}.
\end{equation*}%
From the inverse estimate and the Poincar\'{e}-Friedrichs inequality (noting
that $C_{PF}^{2}$ $\ =O($\textit{diameter of }$\Omega )$) we have%
\begin{equation*}
C_{PF}^{-1}||u_{h}^{\prime }||\leq ||\nabla u_{h}^{\prime }||\leq
C_{INV}h^{-1}||u_{h}^{\prime }||.
\end{equation*}%
Thus,%
\begin{equation*}
C_{PF}^{-2}\frac{\left\langle ||u_{h}^{\prime }||^{2}\right\rangle }{%
\left\langle ||\nabla u_{h}^{^{\prime }}||^{2}\right\rangle }\leq 1\leq
C_{INV}^{2}h^{-2}\frac{\left\langle ||u_{h}^{\prime }||^{2}\right\rangle }{%
\left\langle ||\nabla u_{h}^{^{\prime }}||^{2}\right\rangle }.
\end{equation*}%
Rearranging, it follows from the definition of $L$ that, as claimed,%
\begin{equation*}
C_{INV}^{-2}h^{2}L^{-2}\leq \frac{I(u_{h})}{I(\nabla u_{h})}=\frac{%
\left\langle ||u_{h}^{\prime }||^{2}\right\rangle }{\left\langle ||\nabla
u_{h}^{^{\prime }}||^{2}\right\rangle }L^{-2}\leq C_{PF}^{2}L^{-2}.
\end{equation*}
\end{proof}

\begin{remark}
Phenomenology (\ref{eq:BetaSmall}) suggests that the true\ value\ of $\beta $%
\ is small, $\beta <<1$. On an under refined mesh, a reasonable default
choice of $\beta $ seems to be%
\begin{equation*}
\beta (x)=h_{e}(x)^{2}\text{ or }\beta =\left( min_{e}h_{e}\right) ^{2}.
\end{equation*}
\end{remark}

\section{Numerical Explorations}

The corrected model and its time discretizations give a closure that is
dissipative on time average. The remaining question is whether the
correction incorporates backscatter, i.e., whether $MD(t)$ changes sign
while being nonnegative on time average. To test the theory, we consider the
Smagorinsky model (rather than models which perform better in practical
calculations). This is because the Smagorinsky model is over-diffused and
among models in use likely the one for which backscatter would be most
difficult to introduce. Next, since it is believed that an inverse cascade
and backscatter are much more common in the physics of $2d$ flow at high $Re$
than for $3d$ flows, we have selected a $2d$ test problem. (It is also one
for which we have done a number of detailed simulation of the evolution of
velocity ensembles in \cite{J14}, \cite{JKL14}, \cite{JL14a}, \cite{JL14b}.
While not directly relevant herein, this experience with velocity ensembles
for this flow was useful in validation.)

\begin{figure}[h]
\begin{center}
{\Large 
\begin{tabular}{cc}
&  \\ 
\includegraphics[width=11.1cm,height=7.6cm]{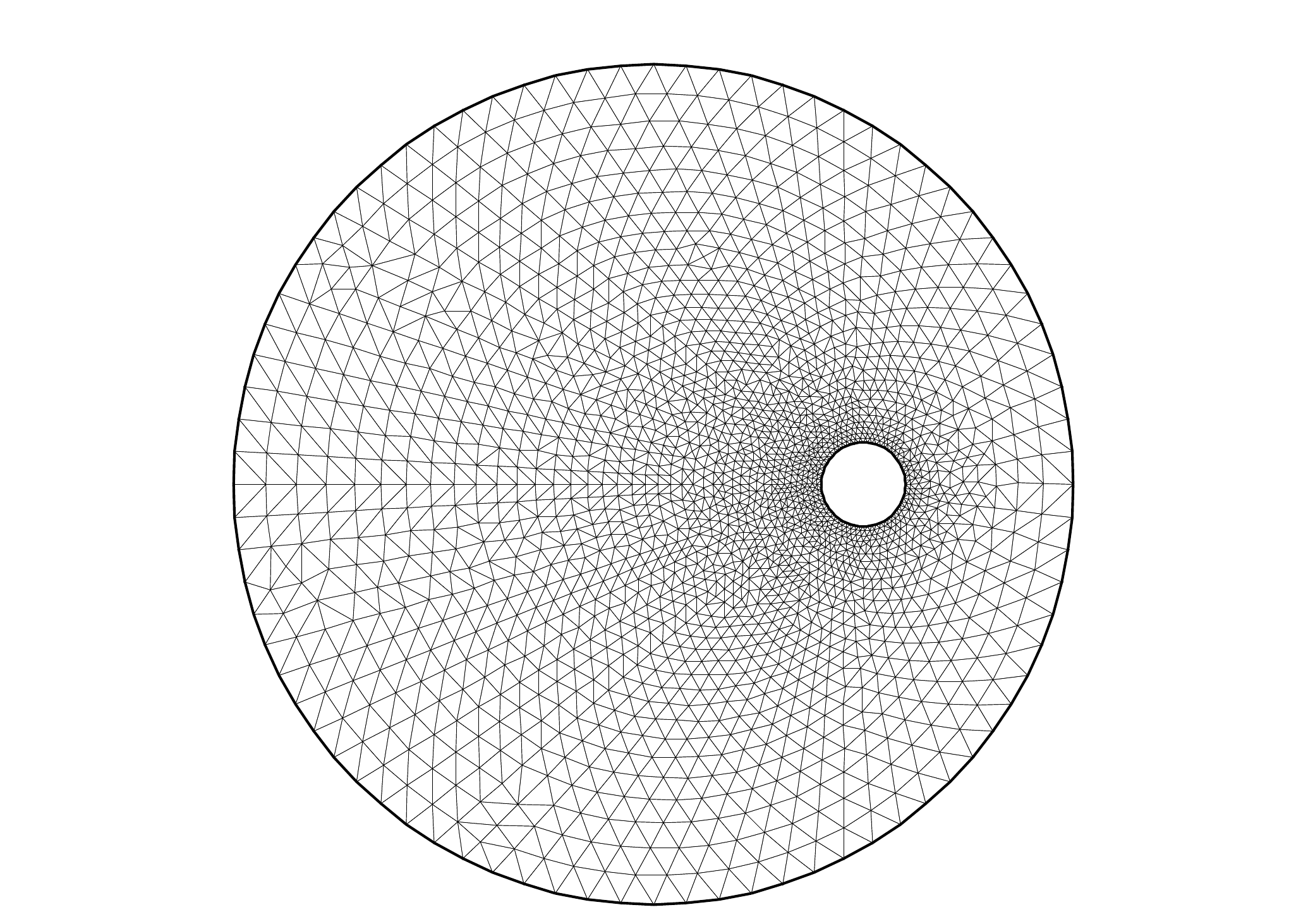} &  \\ 
& 
\end{tabular}
}
\end{center}
\caption{Shown above is the mesh used for the flow between two offest
circles.}
\label{Fig:0}
\end{figure}

\textbf{Test Problem: 2D Flow Between Offset Circles. } Pick%
\begin{gather*}
\Omega =\{(x,y):x^{2}+y^{2}\leq r_{1}^{2}\text{ and }%
(x-c_{1})^{2}+(y-c_{2})^{2}\geq r_{2}^{2}\}, \\
r_{1}=1,r_{2}=0.1,c=(c_{1},c_{2})=(\frac{1}{2},0), \\
f(x,y,t)=(-4y(1-x^{2}-y^{2}),4x(1-x^{2}-y^{2}))^{T},
\end{gather*}%
with no-slip boundary conditions on both circles. The flow (inspired by the
extensive work on variants of Couette flow, \cite{EP00}), driven by a
counterclockwise force (with $f\equiv 0$ at the outer circle), rotates about 
$(0,0)$ and interacts with the immersed circle. This induces a von K\'{a}rm%
\'{a}n vortex street which re-interacts with the immersed circle creating
more complex structures. This flow also exhibits near wall turbulent streaks
and a central polar vortex that pulsates. We discretize in space using the
usual finite element method with Taylor-Hood elements, \cite{G89}, using the
code FreeFEM++, \cite{HP} and in time using (Method 1). For the Smagorinsky
model to simplify notations we have previously used the full gradient in $%
\nu _{T}$. In the implementation, we have used $\tilde{S}_{ij}$, the stain
rate or deformation tensor, 
\begin{equation*}
\nu _{T}=(0.1\Delta x)^{2}|\tilde{S}|.
\end{equation*}%
The choice $C_{s}=0.1$ is common though not universal, see Table 1 in \cite%
{Smag93}. Here $|\tilde{S}|=\sqrt{2\tilde{S}_{ij}\tilde{S}_{ij}}$. All the
theory of the previous sections applies to choosing $S$ instead of $\nabla w$%
. We take $\Delta x$ to be the length of the shortest edge of all triangles.
We also take 
\begin{equation*}
a(\cdot )=\sqrt{\frac{\nu _{T}}{\nu }},\beta =8\ast 10^{-5},\nu
=10^{-4},\triangle t=0.01,T=10.
\end{equation*}%
Here $T$ is the simulation time. The numerical solutions are computed on an
under-resolved, Delaunay-generated triangular mesh with $80$ mesh points on
the outer circle and $60$ mesh points on the inner circle, providing $18,638$
total degrees of freedom, refined near the inner circle (see Figure 6.1).
For this mesh the shortest edge of all triangles is $min_{e}h_{e}=0.0110964$
and the longest edge $max_{e}h_{e}=0.108046$.

We compute the following quantities. 
\begin{align*}
& MD=\int_{\Omega }\beta ^{2}a(t^{n})\left( \frac{%
a(t^{n})w(t^{n+1})-a(t^{n-1})w(t^{n})}{\triangle t}\right) \cdot w(t^{n+1})dx
\\
& \qquad \qquad \qquad +\nu _{T}\Vert \nabla w^{n+1}\Vert ^{2}, \\
& TMD=\int_{\Omega }\beta ^{2}a(t^{n})\left( \frac{%
a(t^{n})w(t^{n+1})-a(t^{n-1})w(t^{n})}{\triangle t}\right) \cdot
w(t^{n+1})dx, \\
& EVD=\int_{\Omega }\nu _{T}^{n}|\nabla w^{n+1}|^{2}dx, \\
& VD=\nu \Vert \nabla w^{n+1}\Vert ^{2}.
\end{align*}%
Note that if $\beta =0$ (i.e., if we were solving the usual Smagorinsky
model) we would have $MD=EVD>0$. Observe in the first plot of Figure 6.2
that with $\beta =8\ast 10^{-5}$, $MD(t)$ is on time average positive
(consistent with theoretical predictions) but there are times when $MD$
becomes negative and indicates backscatter. Thus the corrections to the eddy
viscosity models do have built into them the possibility of representing
backscatter. Various other statistics are also plotted in Figures 6.2
including TMD which represents the effect of the new term, the eddy
viscosity dissipation term EVD and the viscous dissipation term VD.

\begin{figure}[h]
\begin{center}
{\Large 
\begin{tabular}{cc}
\includegraphics[width=12.1cm,height=3.9cm]{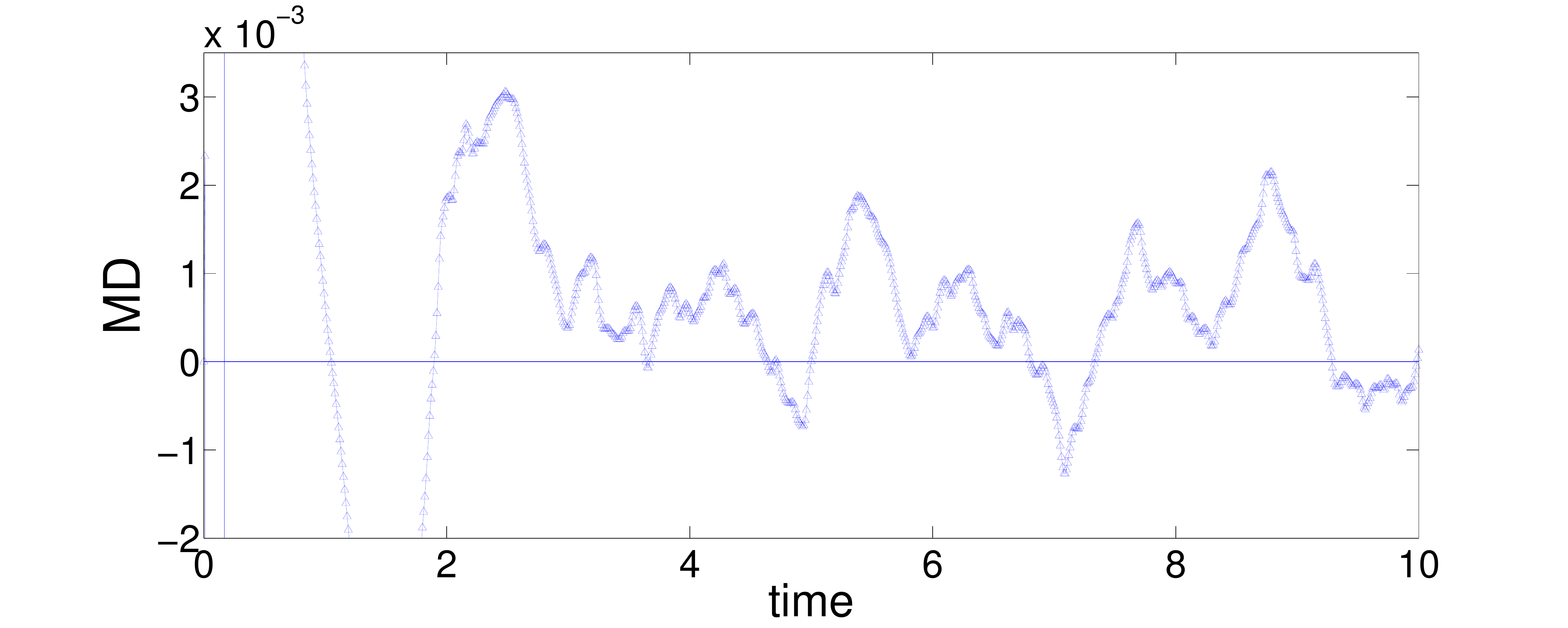} &  \\ 
\includegraphics[width=12.1cm,height=3.9cm]{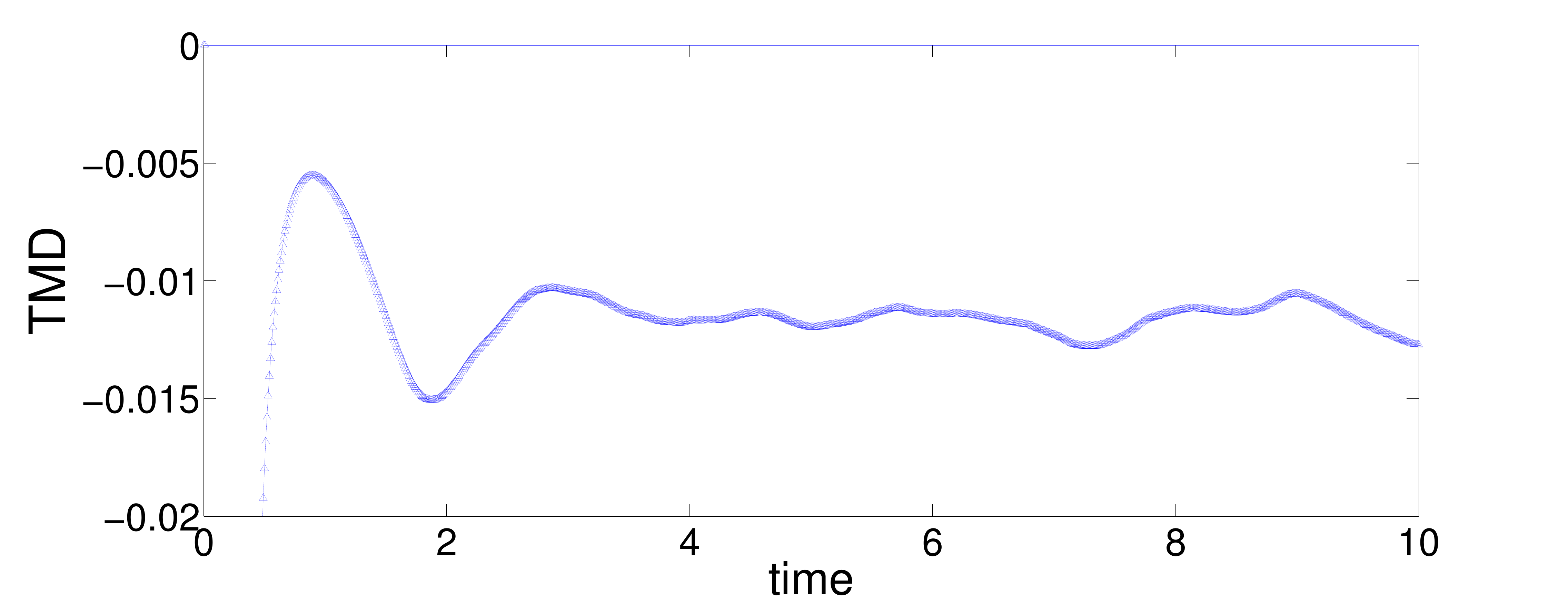} &  \\ 
\includegraphics[width=12.1cm,height=3.9cm]{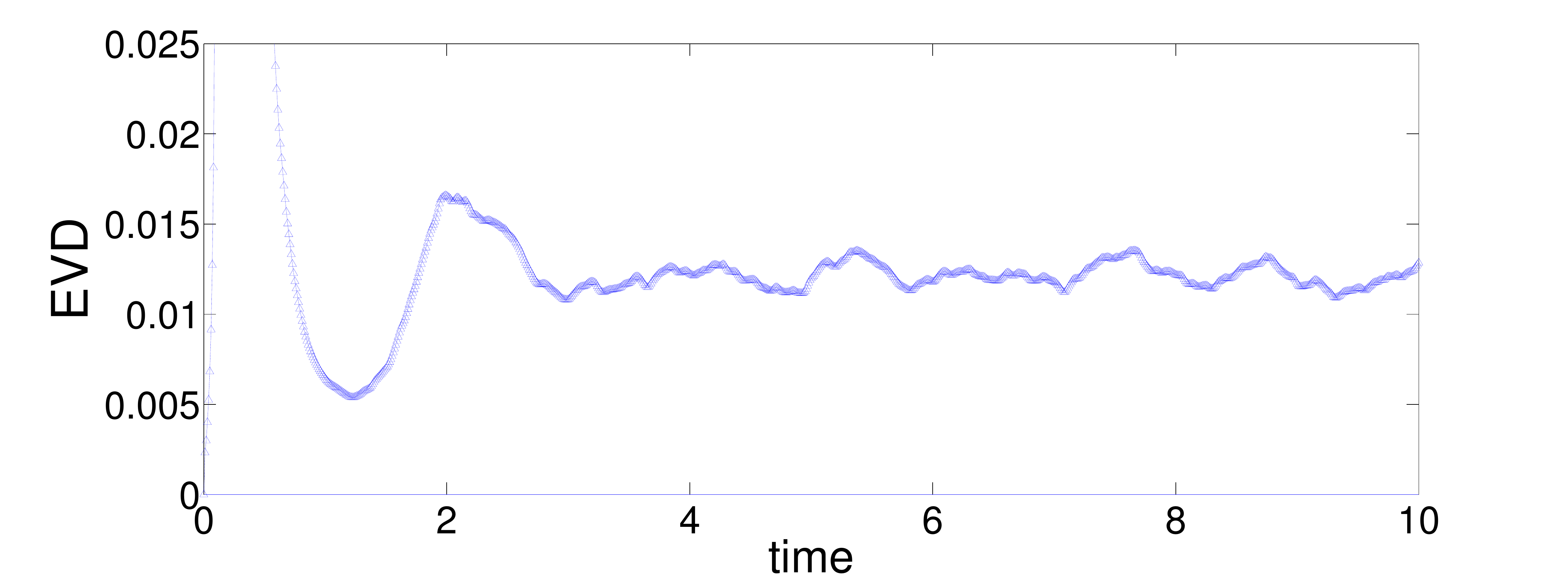} &  \\ 
\includegraphics[width=12.1cm,height=3.9cm]{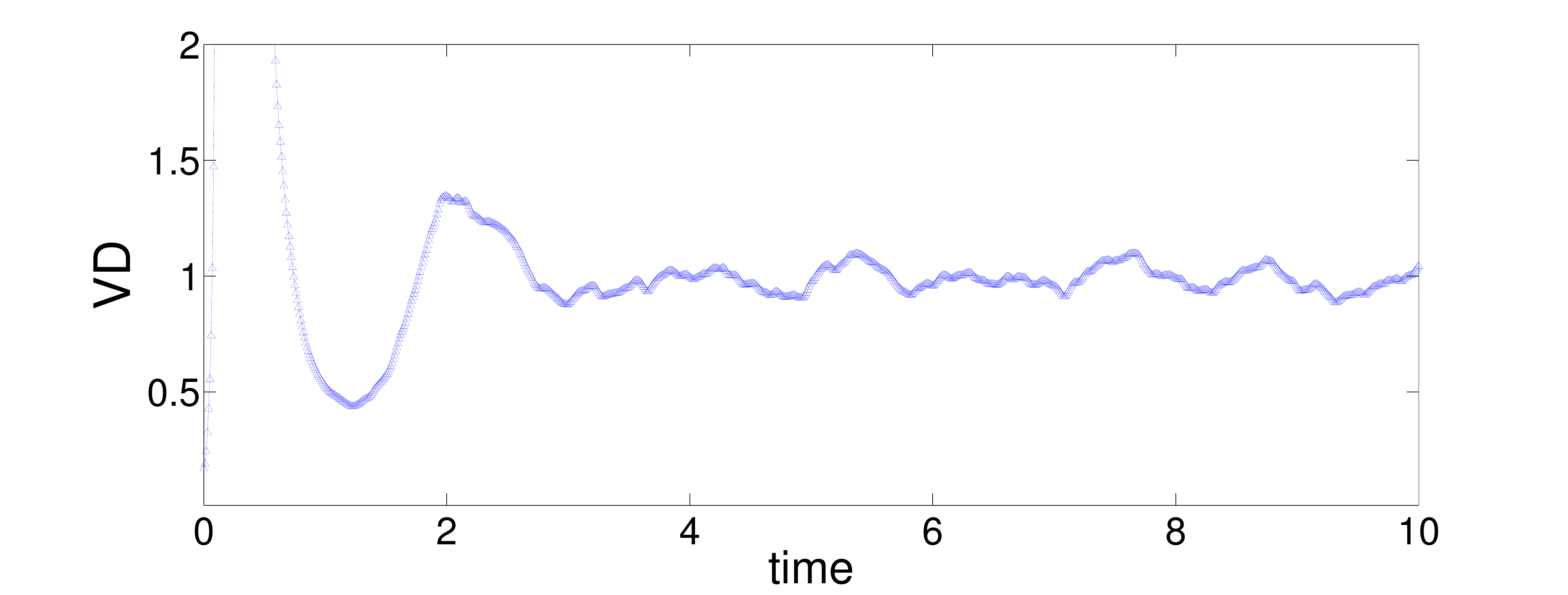} &  \\ 
& 
\end{tabular}
}
\end{center}
\caption{$\protect\nu =1/10000$, $\Delta t=0.01$, $m=80$, $n=60$, $\protect%
\beta =8\ast 10^{-5}$.}
\label{Fig:1}
\end{figure}

\section{Conclusions}

We have shown that eddy viscosity models can be quite easily adapted to
non-equilibrium turbulence and incorporate backscatter without using
negative turbulent viscosities. The modified eddy viscosity model has been
tested successfully for the Smagorinsky model, chosen because it is over
dissipative. Some preliminary and formal calculations were given for the
scaling parameter $\beta $. It may also happen that for accuracy a different
fluctuation model may be needed for the viscous dissipation term and the
kinetic energy term. This is an open question. Strong solutions of the new
models have been proven to share the property of the true Reynolds stresses
of being dissipative on time average. We have also given three methods for
time discretization preserving this property, including a modular correction
for an eddy viscosity code. There are many important open questions
including accuracy tests, existence of weak solutions to the new models,
model calibration and extension to and testing for better EV models.

\end{document}